\newtheorem{thm}{Theorem}[section]
\newtheorem{cor}[thm]{Corollary}
\newtheorem{lem}[thm]{Lemma}
\newtheorem{con}[thm]{Conjecture}
\theoremstyle{definition}
\theoremstyle{remark}
\newtheorem{rem}[thm]{Remark}
\DeclareMathOperator{\erf}{erf} \DeclareMathOperator{\Id}{Id}
\DeclareMathOperator{\tr}{tr} \DeclareMathOperator{\ad}{ad}
\newcommand{\1}{\mathbf{1}}
\newcommand{\A}{{\mathcal{A}}}
\newcommand{\AR}{{\mathcal{A}(\R)}}
\newcommand{\AG}{{\mathcal{A}(G)}}
\newcommand{\C}{\mathbb{C}}
\newcommand{\Ceps}{{\mathcal{C}_\eps}}
\newcommand{\CM}{\mathcal{C}}
\newcommand{\D}{\Delta}
\newcommand{\SD}{{\sqrt{\Delta}}}
\newcommand{\eps}{\varepsilon}
\newcommand{\EW}{{E^\omega}}
\newcommand{\FCT}{{\mathcal{F}_{c,\vartheta}}}
\newcommand{\FUn}{{\mathcal{F}_{U,n}}}
\newcommand{\FVn}{{\mathcal{F}_{V,n}}}
\newcommand{\func}{\varphi}
\newcommand{\g}{\mathfrak{g}}
\newcommand{\ka}{{\kappa^\eps_\alpha}}
\newcommand{\kb}{{\kappa^\eps_\beta}}
\newcommand{\M}{{\mathcal{M}}}
\newcommand{\metric}{\mathbf{g}}
\newcommand{\N}{\mathbb{N}}
\newcommand{\OC}{{\mathcal{O}(\C)}}
\newcommand{\orb}{\gamma}
\newcommand{\R}{\mathbb{R}}
\newcommand{\Rn}{{\mathbb{R}^n}}
\newcommand{\RG}{{\mathcal{R}(G)}}
\newcommand{\U}{\mathcal{U}}
\newcommand{\VG}{{\mathcal{V}(G)}}
\newcommand{\WNT}{{\mathcal{W}_{N,\vartheta}}}
\begin{document}
\title{Analytic factorization of Lie group representations}
\author{Heiko Gimperlein, Bernhard Kr\"{o}tz, Christoph Lienau}
\date{}

\maketitle
\begin{abstract}
\noindent For every moderate growth representation $(\pi,E)$  of a real Lie
group $G$ on a Fr\'{e}chet space, we prove a factorization theorem of Dixmier--Malliavin
type for the space of analytic vectors $E^{\omega}$. There exists a
natural algebra of superexponentially decreasing analytic functions
$\AG$, such that $\EW=\Pi(\AG) \ \EW$. As a corollary we obtain that
$\EW$ coincides with the space of analytic vectors for the Laplace--Beltrami operator on
$G$.
\end{abstract}

\section{Introduction}

Consider a category $\CM$ of modules over a nonunital algebra $\A$. 
We say that $\CM$ has the \emph{factorization
property} if for all $\M \in \CM$,
$$\M \ =\ \A \cdot \M \ :=\ \mathrm{span}\left\{a \cdot m \ \mid\ a \in \A,\ m \in \M\right\}.$$
In particular, if $\A\in \CM$ this implies $\A = \A\cdot \A$.

Let $(\pi, E)$ be a representation of a real Lie group $G$ on a
Fr\'{e}chet space $E$. Then the corresponding space of smooth
vectors $E^\infty$ is again a Fr\'{e}chet space. The representation
$(\pi, E)$ induces a continuous action $\Pi$ of the algebra
$C^{\infty}_{c}(G)$ of test functions on $E$ given by
\[\Pi(f)v=\int_{G}f(g)\pi(g)v\ dg \ \ \ (f\in\mathrm{C}^{\infty}_{c}(G),v \in
E),\] which restricts to a continuous action on $E^\infty$. Hence
the smooth vectors associated to such representations are a
$C^{\infty}_{c}(G)$--module, and a result by Dixmier and Malliavin
\cite{dixmal} states that this category has the factorization
property.

In this article we prove an analogous result for the category of
analytic vectors.

For simplicity, we outline our approach for a Banach representation
$(\pi, E)$. In this case, the space $\EW$ of analytic vectors is
endowed with a natural inductive limit topology, and gives rise to a
representation $(\pi, \EW)$. To define an
appropriate algebra acting on $\EW$, we fix a left--invariant
Riemannian metric on $G$ and let $d$ be the associated distance
function. The continuous functions $\RG$ on $G$ which decay faster
than $e^{-n d(g,\1)}$ for all $n \in \N$ form a $G \times G$--module
under the left--right regular representation. We define $\AG$ to be
the space of analytic vectors of this action. Both $\RG$ and $\AG$
form an algebra under convolution, and the action $\Pi$ of
$C_c^\infty(G)$ extends to give $\EW$ the structure of an
$\AG$--module.

In this setting, our main theorem says that the category of analytic
vectors for Banach representations of $G$ has the factorization
property. More generally, we obtain a result for F--representations:
\begin{thm}\label{MainTheorem}
Let $G$ be a real Lie group and $(\pi,E)$ an $F$--representation of
$G$. Then
$$\AG \ =\ \AG \ast \AG$$
and
$$\EW\ =\  \Pi(\AG) \ \EW\ = \ \Pi(\AG) \ E.$$
\end{thm}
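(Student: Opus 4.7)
The plan is to reduce the three equalities of Theorem~\ref{MainTheorem} to the single factorization $\EW \subseteq \Pi(\AG)\EW$ and then execute that via a Dixmier--Malliavin-type summation using heat-kernel approximate identities. As a preliminary step I would observe that the three assertions are not independent: the smoothing statement $\Pi(\AG) E \subseteq \EW$ follows from the construction of $\AG$ itself, since for $\phi \in \AG$ the orbit map $g \mapsto \Pi(L_g \phi) v$ extends analytically because $\phi$ is an analytic vector of the left regular representation and the defining integral converges by the superexponential decay of $\phi$ against the moderate growth of $\pi$. Given this inclusion, the two identities $\EW = \Pi(\AG)\EW = \Pi(\AG) E$ become equivalent via $\EW \subseteq E$, and $\AG = \AG \ast \AG$ is recovered by specializing the vector factorization to a regular representation on a completion whose analytic vectors coincide with $\AG$.

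The substantive step is therefore $\EW \subseteq \Pi(\AG)\EW$, for which I would construct an approximate identity $(\kappa_\eps)_{\eps > 0} \subset \AG$ out of heat kernels on $G$: for small $\eps > 0$, $\kappa_\eps$ is real-analytic, extends holomorphically to a strip of width $\sim \sqrt{\eps}$ in a complexification of $G$, and satisfies Gaussian decay both on $G$ and on the strip. These estimates place $\kappa_\eps$ in $\AG$, control the blow-up of its $\AG$-seminorms as $\eps \downarrow 0$, and yield $\Pi(\kappa_\eps) v \to v$ in $\EW$ at a rate that can be compared with the analyticity radius of $v$.

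Given $v \in \EW$, the factorization is then produced by a lacunary summation of the approximate identities in the spirit of Dixmier and Malliavin. The starting point is the telescoping identity
$$v \ =\ \Pi(\kappa_{\eps_0}) v \ +\ \sum_{n\ge 0} \Pi(\kappa_{\eps_{n+1}} - \kappa_{\eps_n})\, v,$$
which one wishes to regroup as $v = \Pi(\phi) w$ for a single $\phi \in \AG$ and $w \in \EW$. Inserting auxiliary weights $\lambda_n > 0$, one takes
$$\phi \ =\ \lambda_0^{-1} \kappa_{\eps_0} \ +\ \sum_{n\ge 0} \lambda_n^{-1}(\kappa_{\eps_{n+1}} - \kappa_{\eps_n})\ \in\ \AG,$$
and defines $w \in \EW$ by a matching weighted summation of images of $v$; the scalars $\lambda_n$ are chosen so that the $\phi$-series converges in the analytic topology of $\AG$, the $w$-series converges in $\EW$, and the cross-terms in the double sum $\Pi(\phi) w$ reassemble into $v$.

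The main obstacle, and what distinguishes the analytic case sharply from the smooth Dixmier--Malliavin argument, is the rigidity of analytic functions: Borel's theorem is unavailable, so the Taylor data of $\phi$ near the identity cannot be prescribed independently of its global size. All flexibility must come from the quantitative Gaussian estimates for $\kappa_\eps$ on holomorphic strips, balanced against the analyticity data of $v$. Forcing these two opposing estimates to align so that the $\phi$- and $w$-series converge simultaneously in their respective analytic topologies is the delicate technical heart of the argument.
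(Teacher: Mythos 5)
Your opening reductions are sound and agree with the paper: $\Pi(\AG)E\subseteq\EW$ is immediate from the definition of $\AG$, the two vector identities then collapse to showing $\EW\subseteq\Pi(\AG)E$, and $\AG=\AG\ast\AG$ follows by applying the vector statement to the $F$--representation $(L\otimes R,\RG)$, whose analytic vectors are $\AG$ by definition. The gap is in the core step, and it is twofold. First, a structural problem: regrouping the telescoping sum $\sum_n\Pi(\kappa_{\eps_{n+1}}-\kappa_{\eps_n})v$ as $\Pi(\phi)w$ with $\phi=\sum_n\lambda_n^{-1}(\kappa_{\eps_{n+1}}-\kappa_{\eps_n})$ cannot work as written: $\Pi(\phi)w$ is a sum of terms $\lambda_n^{-1}\Pi(\kappa_{\eps_{n+1}}-\kappa_{\eps_n})w$ for a \emph{single} $w$, and there is no $w$ making these collapse term by term to $\Pi(\kappa_{\eps_{n+1}}-\kappa_{\eps_n})v$. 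In Dixmier--Malliavin the weighted series sits elsewhere: one writes $\delta_\1=\sum_n a_n\gamma^{(2n)}+\theta$ and obtains the two-term factorization $v=\Pi(\gamma)\bigl(\sum_n a_n\,d\pi(X)^{2n}v\bigr)+\Pi(\theta)v$, the freedom lying in a Borel-type construction of $\gamma\in C_c^\infty$ adapted to the sequence $(a_n)$. Second, and decisively, that construction is exactly what is unavailable in the analytic category, as you yourself observe --- but you leave this ``delicate technical heart'' unresolved, and no choice of $\lambda_n$ can fix your version of it: the $\AG$-seminorms of the heat kernels $\kappa_\eps$ on any fixed complex neighborhood of $\1$ blow up like $e^{c/\eps}$ as $\eps\to 0$ (the holomorphically continued Gaussian grows like $e^{\delta^2/4\eps}$ in a strip of fixed width $\delta$), whereas $\Pi(\kappa_\eps)v\to v$ improves only polynomially in $\eps$ even for analytic $v$. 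The two series therefore cannot be made to converge simultaneously, so the proposal is not merely incomplete but follows a route that fails quantitatively.

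The paper resolves this by abandoning approximate identities altogether. It starts from the explicit scalar identity $\alpha_\eps(z)\cosh(\eps z)+\beta_\eps(z)=1$ with $\alpha_\eps(z)=2e^{-\eps z\erf(z)}$ and $\beta_\eps=1-\alpha_\eps\cosh(\eps\,\cdot)$; since $z\erf(z)$ is an entire regularization of $|z|$, both $\alpha_\eps$ and $\beta_\eps$ decay exponentially on a neighborhood of $\R$ in $\C$, so by an analytic refinement of the Cheeger--Gromov--Taylor functional calculus their convolution kernels $\ka,\kb$ under $\SD$ lie in $\AG$. Applying the identity to the orbit map and replacing $\cosh(\eps\SD)$ by its Taylor series $\Ceps=\sum_j\frac{\eps^{2j}}{(2j)!}\D^j$ --- which converges on analytic vectors for small $\eps$ precisely because their derivatives grow at most like $k!\,R^k$ --- gives $(\Ceps\orb_v)\ast\ka+\orb_v\ast\kb=\orb_v$, and evaluation at $\1$ yields $v=\Pi(\ka)\bigl((\Ceps\orb_v)(\1)\bigr)+\Pi(\kb)v$. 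The fixed sequence $\eps^{2j}/(2j)!$ thus replaces the adapted sequence $(a_n)$ of the smooth case, and the error function supplies the explicit analytic symbol pair that your outline would still need to construct.
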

Let us remark that the special case of bounded Banach representations
of $(\mathbb{R},+)$ has been proved by one of the authors in \cite{lienau}. \\
As a corollary of Theorem \ref{MainTheorem} we obtain that a vector is analytic if and only if it
is analytic for the Laplace--Beltrami operator, which generalizes a result of Goodman \cite{goodman2}
for unitary representations. \\
In particular, the theorem extends Nelson's result that
$\Pi(\AG) \ \EW$ is dense in $\EW$ \cite{nelson}. G\r{a}rding had
obtained an analogous theorem for the smooth vectors \cite{garding}.
However, while Nelson's proof is based on approximate units constructed from
the fundamental solution $\varrho_t \in \AG$ of the heat equation on
$G$ by letting $t \to 0^+$, our strategy
relies on some more sophisticated functions of the Laplacian.\\
To prove Theorem \ref{MainTheorem}, we first consider the case
$G=(\mathbb{R},+)$. Here the proof is based on the key identity
\[\alpha_\eps(z) \cosh(\eps z) + \beta_\eps(z) = 1,
\]
for the entire functions $\alpha_\eps(z) =2 e^{-\eps z \erf(z)}$ and
$\beta_\eps(z) = 1-\alpha_\eps(z) \cosh(\eps z)$ on the complex
plane~\footnote{Some basic properties of these functions and the
Gaussian error function $\erf$ are collected in the appendix.}. We
consider this as an identity for the symbols of the Fourier
multiplication operators $\alpha_\eps(i\partial)$,
$\beta_\eps(i\partial)$ and $\cosh(i\eps\partial)$. The functions
$\alpha_\eps$ and $\beta_\eps$ are easily seen to belong to the
Fourier image of $\AR$, so that $\alpha_\eps(i\partial)$ and
$\beta_\eps(i\partial)$ are given by convolution with
some $\ka, \kb \in \AR$. For every $v \in \EW$ and sufficiently
small $\eps >0$, we may also apply $\cosh(i\eps\partial)$ to the
orbit map $\orb_v(g) = \pi(g)v$ and conclude that
$$(\cosh(i\eps \partial)\ \orb_v) \ast \ka + \orb_v \ast \kb = \orb_v.$$
The theorem follows by evaluating in $0$. \\
Unlike in the work of Dixmier and Malliavin, the rigid nature of
analytic functions requires a global geometric approach in the
general case. The idea is to refine the functional calculus of
Cheeger, Gromov and Taylor \cite{cgt} for the Laplace-Beltrami
operator in the special case of a Lie group. Using this tool, the
general proof then closely mirrors the argument for $(\R,+)$.\\
The article concludes by showing in Section \ref{related} how our
strategy may be adapted to solve some related factorization
problems.

\section{Basic Notions of Representations}

For a Hausdorff, locally convex and sequentially complete
topological vector space $E$ we denote by $GL(E)$ the associated
group of isomorphisms. Let $G$ be a Lie group. By a {\it
representation} $(\pi, E)$ of $G$ we understand a group homomorphism
$\pi: G \to GL(E)$ such that the resulting action
$$ G\times E \to E, \ \ (g, v)\mapsto \pi(g)v,$$
is continuous. For a vector $v\in E$ we shall denote by
$$\gamma_v: G\to E, \ \ g\mapsto \pi(g)v,$$
the corresponding continuous orbit map.

\par If $E$ is a Banach space, then
$(\pi, E)$ is called a {\it Banach representation}.

\begin{rem} \label{B-rep} Let $(\pi, E)$ be a Banach representation.
The uniform boundedness principle implies that
the function
$$w_\pi: G \to \R_+, \ \ g\mapsto \|\pi(g)\|,$$
is a {\it weight}, i.e. a locally bounded submultiplicative
positive function on $G$. \end{rem}

A representation $(\pi, E)$ is called an
{\it F-representation} if

\begin{itemize}
\item $E$ is a Fr\'echet space.
\item There exists a countable family of seminorms $(p_n)_{n\in \N}$
which define the topology of $E$ such that for every $n\in \N$
the action $G \times (E,p_n)\to (E,p_n)$ is continuous. Here $(E, p_n)$
stands for the vector space $E$ endowed with the topology induced
from $p_n$.
\end{itemize}

\begin{rem} (a) Every Banach representation is an $F$-representation.
\par (b) Let $(\pi, E)$ be a Banach representation and $\{X_n : n \in \N\}$
a basis of the universal enveloping algebra $\U(\g)$ of the Lie
algebra of $G$. Define a topology on the space of smooth vectors
$E^\infty$ by the seminorms $p_n(v) = \|d\pi(X_n) v\|$. Then the
representation $(\pi, E^\infty)$ induced by $\pi$ on this subspace
is an $F$-representation (cf.~\cite{bk}).
\par (c) Endow $E=C(G)$ with the topology of compact convergence. Then $E$
is a Fr\'echet space and $G$ acts continuously on $E$ via right
displacements in the argument. The corresponding representation
$(\pi, E)$, however, is not an $F$-representation.
\end{rem}

\subsection{Analytic vectors}
If $M$ is a complex manifold and $E$ is a topological vector space,
then we denote by $\mathcal{O}(M,E)$ the space of $E$-valued holomorphic maps.
We remark that $\mathcal{O}(M,E)$ is a topological vector space with regard
to the compact-open topology.

\par Let us denote by $\g$ the Lie algebra of $G$ and by $\g_\C$ its complexification.
We assume that $G\subset G_\C$ where $G_\C$ is a Lie group with Lie
algebra $\g_\C$. Let us stress that this assumption is superfluous
but simplifies notation and exposition. We denote by $\U_\C$ the set
of open neighborhoods of $\1 \in G_\C$.

\par If $(\pi,E)$ is a representation, then we call a vector $v\in E$ {\it analytic}
if the orbit map $\gamma_v: G\to E$ extends to a holomorphic map to
some $GU$ for $U\in \U_\C$. The space of all analytic vectors is
denoted by $E^\omega$. We note the natural embedding
$$E^\omega \to \lim_{U \to \{\1\}} \mathcal{O}(G U , E), \ \ v\mapsto \gamma_v,$$
and topologize $E^\omega$ accordingly.

\section{Algebras of superexponentially decaying functions}

We wish to exhibit natural algebras of functions acting on
$F$-representations.
For that let us fix a left invariant Riemannian metric $\metric$ on $G$.
The corresponding Riemannian measure $dg$ is a left
invariant Haar measure on $G$.
We denote by $d(g,h)$ the distance function associated to $\metric$
(i.e. the infimum of the lengths of all paths connecting $g$ and $h$)
and set
$$ d(g) :=d(g,\1) \qquad (g\in G) \, .$$
Here are two key properties of $d(g)$, see \cite{garding}:

\begin{lem}\label{lem=sm} If
$w : G \to \R_+$ is locally bounded and submultiplicative (i.e. $w(gh) \leq w(g)w(h)$), then there exist
$c,C>0$ such that
$$w(g) \leq C e^{c d(g)} \qquad (g\in G).$$
\end{lem}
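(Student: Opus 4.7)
The strategy is the standard one: exploit local boundedness near $\1$ to get an $L^\infty$--bound on a small metric ball, then use submultiplicativity to spread this bound along geodesics, paying a factor of the local bound per unit of distance.

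First I would fix $r>0$ small enough that the closed ball $\overline{B_r(\1)} = \{g \in G : d(g) \leq r\}$ is compact; such an $r$ exists since the Riemannian exponential map is a diffeomorphism near $0 \in \g$. Local boundedness of $w$ then yields a constant $M \geq 1$ with $w(g) \leq M$ for all $g \in \overline{B_r(\1)}$.

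Next, given an arbitrary $g \in G$, I would pick a piecewise smooth path $\sigma \colon [0,1] \to G$ from $\1$ to $g$ of length $\ell \leq d(g) + r$, which exists by the definition of $d$ as an infimum of path lengths. Choose an integer $N$ with $N \geq (d(g)+r)/r$ and $N \leq d(g)/r + 2$, and subdivide $\sigma$ into $N$ consecutive segments each of length at most $r$. Writing the partition points as $h_0 = \1, h_1, \dots, h_N = g$ and setting $g_i := h_{i-1}^{-1} h_i$, left--invariance of $\metric$ gives $d(g_i) = d(h_{i-1}, h_i) \leq r$, so each $g_i$ lies in $\overline{B_r(\1)}$. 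Submultiplicativity applied to $g = g_1 g_2 \cdots g_N$ yields
$$ w(g) \ \leq\ \prod_{i=1}^N w(g_i) \ \leq\ M^N \ \leq\ M^{d(g)/r + 2} \ =\ M^2 \, e^{(\log M / r)\, d(g)}, $$
which is the desired bound with $C := M^2$ and $c := (\log M)/r$ (and taking $M$ larger if necessary so that $M > 1$ and $c > 0$).

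The only point requiring a little care is the path subdivision and the identification of the increments $g_i$ as elements of $\overline{B_r(\1)}$; this is precisely where left--invariance of the metric is essential, since otherwise $d(h_{i-1}, h_i)$ would not translate into a bound on $d(g_i, \1)$. No serious obstacle is expected beyond this bookkeeping.
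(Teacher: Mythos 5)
Your argument is correct and is precisely the classical one: the paper itself gives no proof but refers to G\r{a}rding's note, where the bound is obtained by exactly this device of covering a near-minimizing path by $O(d(g)/r)$ increments lying in a compact ball and invoking submultiplicativity. The only cosmetic caveat is the (harmless) convention needed when $G$ is disconnected, where $d(g)$ may be infinite and the estimate holds vacuously.
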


\begin{lem} There exists $c>0$ such that for all $C>c$, $\int e^{-C d(g)} \ dg < \infty$.
\end{lem}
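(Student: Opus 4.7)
My plan is to reduce the statement to a bound on the volume of Riemannian balls and then establish that the volume grows at most exponentially by a covering argument on $G$.

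Writing $B_r := \{g \in G : d(g) \leq r\}$ and applying the layer-cake formula with the left-invariant Haar measure $dg$,
\[
\int_G e^{-Cd(g)}\, dg \;=\; C\int_0^\infty e^{-Cs}\,\mathrm{vol}(B_s)\, ds .
\]
So it suffices to exhibit $c>0$ and $A>0$, independent of $C$, with $\mathrm{vol}(B_r)\leq A e^{cr}$; then any $C>c$ gives a convergent integral.

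For the exponential volume bound: a left-invariant Riemannian metric on $G$ is geodesically complete (every geodesic at $\1$ left-translates to any starting point), so by Hopf--Rinow each $B_r$ is compact. Fix $\rho > 0$ and a relatively compact open neighborhood $U$ of $\1$ containing $B_\rho$. Since $\overline{U\cdot U}$ is compact, one can cover it by finitely many left-translates $f_1 U, \ldots, f_m U$ with $f_i \in \overline{U\cdot U}$; set $F = \{f_1, \ldots, f_m\}$, so that $U \cdot U \subset F \cdot U$. Induction on $n$ via $U^{n+1} = U^n \cdot U \subset F^{n-1}U \cdot U \subset F^n U$ then gives $U^n \subset F^{n-1}\cdot U$, hence $\mathrm{vol}(U^n) \leq |F|^{n-1}\mathrm{vol}(U)$ by left-invariance of $dg$.

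Finally, any $g \in B_r$ lies in $U^k$ for $k = \lceil r/\rho \rceil$: subdivide a minimizing geodesic from $\1$ to $g$ into $k$ pieces of length at most $\rho$ and use left-invariance of $d$ to turn each piece into an element of $B_\rho \subset U$, yielding $g = u_1 \cdots u_k$. Therefore $\mathrm{vol}(B_r) \leq |F|^{\lceil r/\rho \rceil} \mathrm{vol}(U) \leq A e^{cr}$ with $c = (\log|F|)/\rho$, which combined with the layer-cake reduction proves the lemma. The subtle point to watch is that the covering inclusion must put $F$ on the \emph{left} of $U$, so that the inductive step closes when one estimates $\mathrm{vol}$ using left-invariance of the Haar measure; the analogous right-sided inclusion would not yield an exponential bound on $\mathrm{vol}(U^n)$.
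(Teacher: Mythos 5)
Your proof is correct. Note that the paper does not prove this lemma at all: it is stated together with Lemma \ref{lem=sm} as a ``key property of $d(g)$'' and deferred to G\r{a}rding's note \cite{garding}, so there is no in-paper argument to compare against. What you supply is the standard self-contained proof: the layer-cake reduction to $\mathrm{vol}(B_s)$ is applied correctly (both sides are nonnegative, so Tonelli is available), the exponential volume growth $\mathrm{vol}(B_r)\leq A e^{cr}$ via $U^n\subset F^{n-1}U$ is sound, and your closing remark about keeping the finite set $F$ on the \emph{left} so that left-invariance of $dg$ gives $\mathrm{vol}(fU)=\mathrm{vol}(U)$ is exactly the point one must watch when $G$ is not unimodular. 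The only steps you lean on without detail are standard: completeness of a left-invariant metric (homogeneity gives a uniform lower bound on the existence time of unit-speed geodesics, hence geodesic completeness and, via Hopf--Rinow, compactness of the balls $B_r$ and existence of the minimizing geodesics you subdivide), and the fact that $B_\rho$ admits a relatively compact open neighborhood. If $G$ is disconnected the argument applies verbatim on the identity component, and $e^{-Cd(g)}$ vanishes off it since $d$ is then $+\infty$ there. In short: correct, and more informative than the paper's citation.
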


We introduce the space of {\it superexponentially decaying
continuous functions} on $G$ by
$$\RG := \left\{ \func \in
C(G) \mid \forall n\in \N  : \sup_{g \in G} |\func(g)|\ e^{n d(g)} <
\infty\right\}.$$ It is clear that $\RG$ is a Fr\'echet space which
is independent of the particular choice of the metric $\metric$. A
simple computation shows that $\RG$ becomes a Fr\'echet algebra
under convolution
$$\func*\psi(g)=\int_G \func(x)\ \psi(x^{-1}g) \ dx \qquad (\func, \psi \in \RG, g\in G)\, .$$
We remark that the left-right regular representation $L\otimes R$
 of $G\times G$ on $\RG$ is an $F$-representation.

\par If $(\pi, E)$ is an $F$-representation, then Lemma \ref{lem=sm} and Remark \ref{B-rep} imply
that
$$\Pi(\func)v:= \int_G \func(g)\ \pi(g)v\ dg \qquad (\func\in \RG, v\in E)$$
defines an absolutely convergent integral. Hence the prescription
$$\RG\times  E\to E, \ \ (\func, v)\mapsto \Pi(\func)v,$$
defines a continuous algebra action  of $\RG$ (here continuous refers to the continuity of the
bilinear map  $\RG\times E\to E$).

\par Our concern is now with the analytic vectors
of $(L\otimes R, \RG)$. We set $\AG:=\RG^\omega$
and record that
 $$\AG = \lim_{U \to \{\bf 1\}} \RG_U,$$
where $$\RG_U = \left\{\func \in \mathcal{O}(UGU) \mid  \forall Q \Subset
U \ \forall n\in \N:   \sup_{g \in G} \sup_{q_1, q_2 \in Q}
\left|\func(q_1 g q_2)\right|\ e^{n d(g)} < \infty \right\}.$$ It is clear that $\AG$
is a subalgebra of $\RG$ and that
$$\Pi(\AG)\ E \subset \EW$$
whenever $(\pi, E)$ is an $F$-representation.

\section{Some geometric analysis on Lie groups}\label{SectionGeometricAnalysis}

Let us denote by $\VG$ the space of left-invariant vector fields on $G$.
It is common to identify $\g$ with $\VG$ where $X\in \g$ corresponds to the
vector field $\widetilde X$ given by

$$(\widetilde{X} f) (g) = \frac{d}{dt}\Big|_{t=0} f(g \exp(t X))\ \qquad (g\in G, f\in C^\infty(G))\, .$$
We note that the adjoint of $\widetilde X$ on the Hilbert space
$L^2(G)$ is given by
$$\widetilde{X}^*= -\widetilde{X}  - \tr (\ad X)\, .$$
Note that  $\widetilde X^*=-\widetilde X$ in case $\g$ is unimodular. Let us
fix an an orthonormal basis $X_1, \dots, X_n$ of $\g$ with respect
to $\metric$. Then the Laplace--Beltrami operator $\Delta=d^*d$
associated to $\metric$ is given explicitly by
$$\D = \sum_{j=1}^n (-\widetilde{X_{j}} - \tr (\ad X_j))\ \widetilde{X_{j}}\, .$$
As $(G, \metric)$ is complete, $\D$ is essentially selfadjoint. We
denote by
$$\SD = \int \lambda \ dP(\lambda)$$
the corresponding spectral resolution. It provides us with a
measurable functional calculus, which allows to define
$$f(\SD)= \int f(\lambda)\ dP(\lambda)$$
as an unbounded operator $f(\SD)$ on $L^2(G)$ with domain
$$D(f(\SD))=\left\{\func \in L^2(G) \mid \int |f(\lambda)|^2\ d\langle P(\lambda)\func, \func\rangle < \infty \right\}.$$
Let $c,\vartheta>0$. We are going to apply the above calculus to
functions in the space
\begin{eqnarray*}
\FCT =  \left\{\func \in \OC \mid \forall N \in \N: \sup_{z \in
\WNT}
|\func(z)|\ e^{c|z|} < \infty\right\},\\
\WNT = \left\{z \in \C \mid |\operatorname{Im} z| < N \right\} \cup \left\{z \in
\C \mid |\Im z| < \vartheta |\operatorname{Re} z| \right\}.
\end{eqnarray*}
The resulting operators are bounded on $L^2(G)$ and given by a
symmetric and left invariant integral kernel $K_f \in C^\infty(G
\times G)$. Hence there exists a convolution kernel $\kappa_f\in
C^\infty(G)$ with $\kappa_{f}(x)=\kappa_{f}(x^{-1})$ such that
$K_f(x,y) = \kappa_f(x^{-1}y)$, and for all $x \in G$:
$$f(\SD)\ \func= \int_G K_f(x,y)\ \func(y)\ dy = \int_G \kappa_f(y^{-1}x)\ \func(y) \ dy =
(\func \ast \kappa_f)(x).$$
A theorem by Cheeger, Gromov and Taylor
\cite{cgt} describes the global behavior:

\begin{thm}\label{KernelDecay}
Let $c, \vartheta>0$ and $f \in \FCT$ even. Then $\kappa_f \in \RG$.
\end{thm}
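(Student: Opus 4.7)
The plan is to follow the strategy of Cheeger--Gromov--Taylor, coupling the finite propagation speed of the wave equation on $G$ with superexponential decay of the Fourier transform of $f$. Taking $N=0$ in the defining bound shows that $f$ is integrable on $\R$, so evenness of $f$ together with the functional calculus for $\SD$ gives
\[
f(\SD)\;=\;\frac{1}{2\pi}\int_{\R}\hat f(t)\,\cos(t\SD)\,dt,
\]
with strong convergence on $L^2(G)$.

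The first step is to show that $\hat f$ decays superexponentially on $\R$. Since every horizontal strip $\{|\Im z|<N\}$ is contained in $\WNT$ and $|f(z)|\le C_N e^{-c|z|}$ there, I would shift the contour defining $\hat f(t)$ onto $\R\mp iN$, with the sign of the shift chosen opposite to the sign of $t$, to obtain
\[
|\hat f(t)|\;\le\;C'_N\,e^{-N|t|}\qquad (t\in\R,\ N\in\N).
\]

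The second step couples this Fourier decay with the finite propagation speed of $\cos(t\SD)$, whose Schwartz kernel vanishes outside $\{d(x,y)\le|t|\}$. Pairing the wave representation of $f(\SD)$ with a bump $\chi\in C_c^\infty(G)$ supported in $B(\1,r)$ shows that $f(\SD)\chi$ restricted to $G\setminus B(\1,R)$ receives contributions only from $|t|\ge R-r$, so
\[
\bigl\|f(\SD)\chi\bigr\|_{L^2(G\setminus B(\1,R))}\;\le\;C_n\,e^{-nR}\qquad (R>0,\ n\in\N).
\]
To upgrade these $L^2$-bounds to pointwise bounds on $\kappa_f$, set $h(\lambda)=(1+\lambda^2)^M f(\lambda)$; the polynomial factor is absorbed by the $e^{-c|z|}$ decay on $\WNT$, so $h\in\FCT$ and $h(\SD)=(1+\D)^M f(\SD)$. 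Choosing $2M>\dim G/2$, the local Sobolev embedding $H^{2M}(B(g,\rho))\hookrightarrow C^0(B(g,\rho))$ on balls of a fixed small radius $\rho$ -- with embedding constants uniform in $g$ by left--invariance of $\metric$ -- combined with the analogous $L^2$ estimate for $h$ yields
\[
|\kappa_f(g)|\;\le\;C'_n\,e^{-n\,d(g)}\qquad (g\in G,\ n\in\N),
\]
and the same argument applied to $(1+\D)^k\kappa_f$ gives $\kappa_f\in C^\infty(G)$; in particular $\kappa_f\in\RG$.

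The main obstacle lies in this last upgrade from operator--theoretic $L^2$ bounds to pointwise kernel bounds: one has to exploit the left--invariance of $\metric$ to keep the local Sobolev constants uniform in the base point, and to tune the regularizing parameter $M$ and the strip width $N$ independently so that the superexponential gain in $R$ from the Fourier step is not consumed by the elliptic regularity step.
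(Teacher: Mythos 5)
The paper does not actually prove this statement: Theorem \ref{KernelDecay} is quoted from Cheeger--Gromov--Taylor \cite{cgt}, and only its analytic refinement, Theorem \ref{AnalyticKernelDecay}, is proved in the text. Your sketch is therefore a reconstruction of the argument of \cite{cgt}, and its ingredients are the right ones --- the wave representation $f(\SD)=\frac{1}{2\pi}\int_\R\hat f(t)\cos(t\SD)\,dt$ for even $f$, superexponential decay of $\hat f$ by shifting the contour into the strips $\{|\operatorname{Im}z|<N\}\subset\WNT$, finite propagation speed of $\cos(t\SD)$, and an elliptic bootstrap whose local Sobolev constants are uniform in the base point by left invariance of $\metric$. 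These are exactly the tools the paper itself reuses when it proves Theorem \ref{AnalyticKernelDecay}.

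One step is stated too loosely to close the argument as written. Your $L^2$ estimate controls $f(\SD)\chi=\chi\ast\kappa_f$, i.e.\ a mollification of the kernel, and this does not by itself give bounds on $\kappa_f$: you cannot let $\chi\to\delta_\1$, since the estimate degrades with $\|\chi\|_{L^2}$. The standard repair --- and the one the paper carries out explicitly in the proof of Theorem \ref{AnalyticKernelDecay} --- is to regularize the delta function rather than the kernel: write $\delta_\1=(1+\D)^M\varphi+\psi$ with $\varphi,\psi\in L^2$ compactly supported (a cut-off fundamental solution of $(1+\D)^M$, which lies in $L^2$ once $M>\frac14\dim G$), so that $\kappa_f=h(\SD)\varphi+f(\SD)\psi$ with $h(\lambda)=(1+\lambda^2)^Mf(\lambda)$; both summands are then of the form (function of $\SD$ in your class) applied to a compactly supported $L^2$ function, to which your finite-propagation estimate applies, and one further application of the same device together with the uniform Sobolev embedding yields the pointwise bound $|\kappa_f(g)|\leq C_n'e^{-nd(g)}$ and smoothness. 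A minor point in the same step: $h$ does not lie in $\FCT$ but only in $\mathcal{F}_{c',\vartheta}$ for every $c'<c$, which is harmless because the theorem is asserted for all $c>0$. With these adjustments your outline is a correct proof of the cited result.
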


We are going to need an analytic variant of their result.

\begin{thm}\label{AnalyticKernelDecay}
Under the assumptions of the previous theorem: $\kappa_f \in \AG$.
\end{thm}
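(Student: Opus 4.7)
The plan is to refine the proof of Theorem \ref{KernelDecay} (i.e., the Cheeger--Gromov--Taylor wave-equation argument) by exploiting the extra analyticity that $f\in\FCT$ carries on the cone $|\Im z| < \vartheta|\Re z|$, in addition to the strip $|\Im z|<N$. Since $f$ is even, I first write, via the cosine transform,
$$\kappa_f\ =\ \frac{1}{\pi}\int_0^\infty \hat f(s)\,\Psi_s\,ds,$$
where $\Psi_s$ denotes the convolution kernel of $\cos(s\SD)$, which by finite propagation speed is supported in $\{g\in G:d(g)\le s\}$. The holomorphy of $f$ on the strip $\{|\Im z|<N\}$ already gives $\hat f$ super-exponential decay via contour shifts, and this combined with polynomial bounds on $\Psi_s$ is what yields $\kappa_f\in\RG$ in \cite{cgt}.

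To upgrade to $\kappa_f\in\AG$, I would show that for some sufficiently small $U\in\U_\C$ and every $s>0$, the wave kernel admits a holomorphic extension $(u_1,g,u_2)\mapsto \Psi_s(u_1 g u_2)$ on $U\times G\times U$, with bounds polynomial in $s$ and locally uniform on compact subsets of $U\times U$. Granting this, the super-exponential decay of $\hat f$ renders the integral
$$\kappa_f(u_1 g u_2)\ =\ \frac{1}{\pi}\int_0^\infty \hat f(s)\,\Psi_s(u_1 g u_2)\,ds$$
absolutely convergent for $(u_1,g,u_2)\in U\times G\times U$, and the resulting holomorphic extension of $\kappa_f$ to $UGU$ inherits the super-exponential decay in $d(g)$ required for membership in $\RG_U\subset\AG$.

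The main obstacle lies in constructing the analytic extension of $\Psi_s$ with adequate control. As a distributional fundamental solution of a hyperbolic equation, $\Psi_s$ is singular on the light cone $\{d=s\}$, so it does not extend holomorphically in the group variable in any unrestricted way. The cone hypothesis on $f$ is precisely what compensates for this: it permits a contour deformation of the $s$-integral into the complex plane, where $\cos(s\SD)$ must be interpreted through an analytic parametrix or semigroup-type formulation for the elliptic operator $\SD$ on the real-analytic manifold $G$. Keeping all constants uniform so that the resulting estimates yield the factorial growth that characterizes membership in the analytic vectors $\AG=\RG^\omega$ is the crux of the technical work, and is exactly the ``analytic variant'' of the Cheeger--Gromov--Taylor functional calculus announced in the introduction.
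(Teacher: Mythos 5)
Your proposal contains a genuine gap, and you essentially acknowledge it yourself: the entire argument rests on constructing a holomorphic extension $(u_1,g,u_2)\mapsto\Psi_s(u_1gu_2)$ of the wave kernel with bounds polynomial in $s$, and this step is not carried out. It cannot be carried out in the form you state: $\cos(s\SD)\delta_\1$ is a distribution whose singular support is the sphere $\{d=s\}$, so for each fixed real $s$ it is not even a continuous function, let alone one admitting a uniform holomorphic extension in the group variable with polynomial control. Your suggested remedy --- deforming the $s$-contour and interpreting $\cos(s\SD)$ through an ``analytic parametrix or semigroup-type formulation'' --- is precisely the hard content of the theorem, left as a black box. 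As written, the proposal is a plausible research plan, not a proof.

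The paper avoids this obstacle entirely and never analytically continues the wave kernel. It writes $\delta_\1=\D^k\varphi+\psi$ with $\varphi,\psi\in L^2$ compactly supported and $k>\frac14\dim G$, so that
$$\D^l\kappa_f=\int_\R\hat f^{(2k+2l)}(\lambda)\,\cos(\lambda\SD)\varphi\,d\lambda+\int_\R\hat f^{(2l)}(\lambda)\,\cos(\lambda\SD)\psi\,d\lambda,$$
i.e.\ all powers of the Laplacian are integrated by parts onto $\hat f$. The only input about the wave group is the trivial bound $\|\cos(\lambda\SD)\|_{\mathcal{L}(L^2)}\le1$; the analyticity is instead encoded in the Gevrey-type estimate $|\hat f^{(l)}(\lambda)|\le C_n\,l!\,R^l e^{-n|\lambda|}$ from the appendix (a consequence of Cauchy's formula on $\WCT$, which is where the cone and strip hypotheses on $f$ actually enter --- not as a license to deform the $s$-integral). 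Combined with Sobolev embedding this yields $|\D^l\kappa_f|\le C_1(2l)!\,S^{2l}$ uniformly on $G$, whence right analyticity by Goodman's analytic-domination theorem, left analyticity by the symmetry $\kappa_f(x)=\kappa_f(x^{-1})$, and joint analyticity by Browder's theorem; the superexponential decay is quoted from Cheeger--Gromov--Taylor. If you want to salvage your outline, the missing idea to import is exactly this: trade the (impossible) pointwise analytic continuation of $\Psi_s$ for uniform $L^2$ bounds on $\D^l\kappa_f$ with $(2l)!$ growth, and let an analytic-vectors criterion for the elliptic operator $\D$ do the continuation for you.
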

\begin{proof}
We only have to establish local regularity, as the decay
at infinity is already contained in \cite{cgt}.\\
The Fourier inversion formula allows to express $\kappa_f$ as an
integral of the wave kernel:
$$\kappa_f(\cdot) = K_f(\cdot,\1) = f(\SD)\ \delta_\1 = \int_\R \hat{f}(\lambda)\ \cos(\lambda \SD)\ \delta_\1 \ d\lambda.$$
As we would like to employ $\|\cos(\lambda
\SD)\|_{\mathcal{L}(L^2(G))}\leq 1$, we cut off a fundamental
solution of $\D^{k}$ to write
$$\delta_\1 = \D^k \varphi + \psi$$ for a fixed $k>\frac{1}{4}\operatorname{dim}(G)$ and
some compactly supported $\varphi, \psi \in L^2$. Hence,
$$\Delta^l \kappa_f(\cdot) = \int_\R \hat{f}^{(2k+2l)}(\lambda)\ \cos(\lambda \SD)\ \varphi\ d\lambda +\int_\R \hat{f}^{(2l)}(\lambda)\ \cos(\lambda \SD)\ \psi\
d\lambda.$$ In the appendix we show the following inequality for all $n\in \N$ and some constants $C_{n}, R>0$
$$|\hat{f}^{(l)}(\lambda)| \leq
C_{n}\ l!\ R^l e^{-n |\lambda|}.$$
Using $\|\cos(\lambda
\SD)\|_{\mathcal{L}(L^2(G))}\leq 1$ and the Sobolev inequality, we
obtain
$$|\D^l \kappa_f(\cdot)| \leq C_{1}\ (2l)!\ S^{2l}$$
for some $S>0$. A classical result by Goodman \cite{warner} now
implies the right analyticity of $\kappa_f$, while left analyticity
follows from $\kappa_f(x) = \kappa_f(x^{-1})$. Browder's theorem
(Theorem 3.3.3 in \cite{kp}) then implies joint analyticity.
\end{proof}
\subsection{Regularized distance function}
In the last part of this section we are going to discuss a holomorphic regularization
of the distance function. Later on this will be used to construct certain holomorphic
replacements for cut-off functions. \\
Consider the time--$1$ heat kernel $\varrho :=
\kappa_{e^{-\lambda^2}}$ and define $\tilde{d}$ on $G$ by
$$\tilde{d}(g) := e^{-\D} d(g) = \int_G \varrho(x^{-1}g)\ d(x) \ dx.$$
\begin{lem}\label{dReg}
There exist $U \in \U_\C$ and a constant $C_{U}>0$ such that
$\tilde{d} \in \mathcal{O}(GU)$ and for all $g \in G$ and all $u \in
U$
$$|\tilde{d}(gu) - d(g)| \leq C_U.$$
\end{lem}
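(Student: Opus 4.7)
The idea is to identify $\varrho$ as $\kappa_{f}$ for $f(z) := e^{-z^{2}}$ and then read everything off from $\varrho \in \AG$. A direct check shows $f \in \FCT$ for every $c > 0$ and every $\vartheta \in (0,1)$: on $\{|\operatorname{Im} z| < N\}$ one has $|f(z)| \leq e^{N^{2}} e^{-(\operatorname{Re} z)^{2}}$, and on $\{|\operatorname{Im} z| < \vartheta |\operatorname{Re} z|\}$ one has $|f(z)| \leq e^{-(1-\vartheta^{2})(\operatorname{Re} z)^{2}}$, so $|f(z)|\ e^{c|z|}$ is uniformly bounded on $\WNT$. Theorem \ref{AnalyticKernelDecay} then yields $\varrho \in \AG$: the kernel extends holomorphically to some $UGU$ with $U \in \U_\C$, and for every relatively compact $Q \Subset U$ containing $\1$ and every $n \in \N$ there is $C_{n} > 0$ with
$$|\varrho(q_{1} y q_{2})| \leq C_{n}\ e^{-n d(y)} \qquad (y \in G,\ q_{1},q_{2} \in Q).$$

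Applying this decay with $q_{1} = \1$ and $q_{2} = u \in Q$ and using the triangle inequality $d(x) \leq d(g) + d(g^{-1}x)$, the integral
$$\tilde{d}(gu) = \int_{G} \varrho(x^{-1}gu)\ d(x)\ dx$$
converges absolutely for all $g \in G$ and $u \in U$. Dominated convergence together with Morera's theorem applied in the variable $u$ then give $\tilde{d} \in \mathcal{O}(GU)$, and this extension manifestly agrees with the original definition when $u = \1$.

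For the quantitative estimate the key observation is that $e^{-\D}\1 = \1$ on $(G,\metric)$ (stochastic completeness of the heat semigroup on the Lie group), so that
$$\int_{G} \varrho(x^{-1}g)\ dx = 1 \qquad (g \in G).$$
The left-hand side is holomorphic in its argument by the argument above, so after possibly shrinking $U$ to a connected neighborhood of $\1$ the identity theorem extends this to $\int_{G} \varrho(x^{-1}gu)\ dx = 1$ for all $g \in G$ and $u \in U$. Consequently
$$\tilde{d}(gu) - d(g) = \int_{G} \varrho(x^{-1}gu)\ \bigl[d(x) - d(g)\bigr]\ dx,$$
and the reverse triangle inequality $|d(x) - d(g)| \leq d(g^{-1}x)$ combined with the left-invariant substitution $y = g^{-1}x$ (which also gives $d(y^{-1}) = d(y)$) yields
$$|\tilde{d}(gu) - d(g)| \leq \int_{G} |\varrho(y^{-1}u)|\ d(y)\ dy \leq C_{n} \int_{G} e^{-n d(y)}\ d(y)\ dy,$$
which is finite for sufficiently large $n$ by the integrability statement at the start of Section \ref{SectionGeometricAnalysis}. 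Taking the supremum over $u \in Q$ produces the desired constant $C_{U}$.

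The one genuinely delicate point is the identity $\int_{G} \varrho(x^{-1}gu)\,dx = 1$: its real case encodes stochastic completeness of the heat flow on $(G,\metric)$, while the passage to complex $u$ rests on the fact that a holomorphic function on the connected open $U \subset G_{\C}$ which equals $1$ on the totally real slice $G \cap U$ must be identically $1$. Everything else in the argument is an essentially mechanical consequence of $\varrho \in \AG$.
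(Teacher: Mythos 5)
Your proposal is correct and follows essentially the same route as the paper: extend $\varrho$ holomorphically via Theorem \ref{AnalyticKernelDecay}, use the normalization $\int_G \varrho(x^{-1}gu)\,dx = 1$ (constant on $G$ by left invariance, hence on $GU$ by the identity theorem), and then bound $|\tilde d(gu)-d(g)|$ with the reverse triangle inequality and the superexponential decay of $\varrho$. The extra details you supply (checking $e^{-z^2}\in\FCT$, Morera for holomorphy of $\tilde d$) are consistent with, and slightly more explicit than, the paper's argument.
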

\begin{proof}
According to Theorem \ref{AnalyticKernelDecay} the heat kernel $\varrho$ admits
an analytic continuation to a superexponentially decreasing function on $GU$ for some bounded $U\in \U_{\C}$.
This allows to extend $\tilde{d}$ to $GU$. To
prove the inequality, we consider the integral
$$\bar{\varrho}(y) = \int_G \varrho(x^{-1}y) \ dx$$
as a holomorphic function of $y \in GU$. By the left invariance of
the Haar measure and the normalization of the heat kernel,
$\bar{\varrho}=1$ on $G$, and hence on $GU$. Recall the triangle inequality on $G$: $|d(x)-d(g)|\leq
d(x^{-1}g)$. This implies the uniform bound
\begin{eqnarray*}
\left|\tilde d(gu)-d(g)\right| &=& \left|\int_G \varrho(x^{-1}gu)\ (d(x)-d(g)) \
dx\right|\\
&\leq& \int_G \left|\varrho(x^{-1}gu)\right|\ d(x^{-1}g) \ dx\\
&\leq&\sup_{v \in U} \int_G \left|\varrho(x^{-1}v)\right|\ d(x^{-1}) \ dx.
\end{eqnarray*}
\end{proof}


\section{Proof of the Factorization Theorem}

Let $(\pi,E)$ be a representation of $G$ on a sequentially complete
locally convex Hausdorff space and consider the Laplacian as an
element $$\D = \sum_{j=1}^n (-{X_{j}} - \tr (\ad X_j))\ {X_{j}}$$ of
the universal enveloping algebra of $\g$. A vector $v\in E$ will be
called {\it $\D$-analytic}, if there exists $\eps>0$ such that for
all continuous seminorms $p$ on $E$ one has

$$\sum_{j=0}^\infty \frac{\eps^{j}}{(2j)!}\ p(\Delta^j v)<\infty\, .$$

\begin{lem} \label{CauchyInEq} Let $E$ be a sequentially complete locally convex Hausdorff space
and $\func\in \mathcal{O}(U, E)$ for some $U\in \U_\C$. Then there
exists $R=R(U)>0$ such that for all continuous semi-norms $p$ on $E$
there exists a constant $C_p$ such that
$$p\left(\left(\widetilde{X_{i_1}} \cdots \widetilde{X_{i_k}} \func\right)\left(\1\right)\right) \leq C_p\ k!\ R^k$$
for all $(i_1, \ldots, i_k)\in \N^k$, $k\in \N$.
\end{lem}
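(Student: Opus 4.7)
The plan is to realize the higher-order vector-field derivative as an ordinary mixed partial derivative of an $E$-valued holomorphic function on a polydisk in $\C^k$ and then apply the Cauchy integral formula in several variables. Iterating the defining relation $(\widetilde{X_j}\func)(g) = \tfrac{d}{dt}|_{t=0}\func(g\exp(tX_j))$, one obtains
\[
\left(\widetilde{X_{i_1}}\cdots\widetilde{X_{i_k}}\func\right)(\1) \;=\; \frac{\partial^k}{\partial t_1\cdots\partial t_k}\bigg|_{t=0} \func\bigl(\Phi_{\mathbf i}(t)\bigr),
\]
where $\Phi_{\mathbf i}(t_1,\ldots,t_k) := \exp(t_1 X_{i_1})\cdots\exp(t_k X_{i_k})$ extends to a holomorphic map from a neighborhood of $0\in\C^k$ into $G_\C$, and $\func\circ\Phi_{\mathbf i}$ is holomorphic on $\Phi_{\mathbf i}^{-1}(U)$.

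Next I would produce a polydisk of radius $r/k$, with $r>0$ depending only on $U$ (and not on $k$ or the index string $\mathbf i$), on which $\Phi_{\mathbf i}$ takes values in $U$. Fix any Hermitian inner product on $\g_\C$ extending $\metric|_\g$ and let $d_\C$ be the left-invariant distance on $G_\C$ induced by the associated left-invariant Hermitian metric. Choose $\rho>0$ so that $\{g\in G_\C : d_\C(g,\1)<\rho\}\subset U$, and set $M := \max_{1\leq j\leq n}\|X_j\|_{\g_\C}$. For $|\zeta|$ small, the path $s\mapsto\exp(s\zeta X_{i_j})$, $s\in[0,1]$, has length $|\zeta|\,\|X_{i_j}\|_{\g_\C}$, so by left-invariance of $d_\C$ and the triangle inequality,
\[
d_\C\bigl(\Phi_{\mathbf i}(t),\1\bigr) \;\leq\; \sum_{j=1}^k d_\C\bigl(\exp(t_j X_{i_j}),\1\bigr) \;\leq\; M\sum_{j=1}^k |t_j|.
\]
Hence $r := \rho/M$ works, and the closed polydisk of radius $r/k$ is mapped into a compact set $K\subset U$ depending only on $U$.

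With this in hand, Cauchy's integral formula on the polydisk of radius $r/k$ yields
\[
p\!\left(\frac{\partial^k(\func\circ\Phi_{\mathbf i})}{\partial t_1\cdots\partial t_k}(0)\right) \;\leq\; \frac{C_p}{(r/k)^k} \;=\; C_p\,\frac{k^k}{r^k},
\]
where $C_p := \sup_{g\in K} p(\func(g))<\infty$ since $p\circ\func$ is continuous on $U$. Applying Stirling's inequality $k^k\leq e^k\,k!$ finishes the proof with $R := e/r = eM/\rho$, which depends only on $U$.

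The main obstacle is the second step, obtaining the same radius $r/k$ uniformly in $k$ and in $\mathbf i$. A term-by-term estimate treating each factor $\exp(t_j X_{i_j})$ separately would force the radius to shrink geometrically with $k$ and produce only a bound of the form $(k!)^\alpha R^k$ with $\alpha>1$. The crucial point is that the left-invariant distance $d_\C$ obeys the triangle inequality, so the $k$ displacements accumulate \emph{linearly}; this is exactly what makes a single $r$ suffice for all $k$, and it is the reason for invoking the complexification $G_\C$ and its left-invariant Hermitian structure rather than working only in $G$.
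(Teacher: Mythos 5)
Your proof is correct, but it follows a genuinely different route from the paper's. The paper expands the analytic map $X\mapsto\func(\exp X)$ in its Taylor series at $0\in\g$ and reads off the bound $C_p\,k!\,R^k$ from the absolute summability of that series on a fixed polydisk; this is short but leans on the Lie-group Taylor formula identifying the power-series coefficients of $\func\circ\exp$ with the iterated left-invariant derivatives. You instead realize $\bigl(\widetilde{X_{i_1}}\cdots\widetilde{X_{i_k}}\func\bigr)(\1)$ as a mixed partial of $\func\bigl(\exp(t_1X_{i_1})\cdots\exp(t_kX_{i_k})\bigr)$ and apply the several-variable Cauchy formula on a polydisk of radius $r/k$, recovering $k!\,R^k$ from $k^k/r^k$ via $k^k\le e^k k!$. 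Your key observation --- that the left-invariance and the triangle inequality for $d_\C$ make the $k$ displacements accumulate linearly, so a single $r$ depending only on $U$ serves for every $k$ and every index string --- is exactly what saves the argument from the $(k!)^\alpha$, $\alpha>1$, loss that a naive factor-by-factor domain estimate would produce. What your version buys is an explicit constant $R=eM/\rho$ and independence from the Taylor expansion on Lie groups; what the paper's version buys is brevity. Both correctly yield $R$ independent of the seminorm $p$, as the quantifier order in the statement requires. Two points you should make explicit if you write this up: the closed metric ball $\{d_\C(\cdot,\1)\le\rho\}$ is compact because a left-invariant metric on $G_\C$ is complete (Hopf--Rinow), which is what makes $C_p=\sup_K p\circ\func$ finite; and the vector-valued Cauchy integral exists because $E$ is sequentially complete, after which $p(\int\cdots)\le\int p(\cdots)$ gives the estimate.
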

\begin{proof}
There exists a small neighborhood of $0$ in $\g$ in which the mapping
\[\Phi: \g\rightarrow E, \ X\mapsto \func(\exp(X)),\]
is analytic. Let $X=t_{1}X_{1}+\cdots +t_{n}X_{n}$. Because $E$ is
sequentially complete, $\Phi$ can be written for small $X$ and $t$
as
\[\Phi(X)=\sum_{k=0}^{\infty}\frac{1}{k!}\mathop{\sum_{\alpha\in \mathbb{N}^{n}}}_{|\alpha|=k}\left(\widetilde{X_{\alpha_{1}}} \cdots \widetilde{X_{\alpha_{k}}} \func\right)\left(\1\right)t^{\alpha}.
\]
As this series is absolutely summable, there exists a $R>0$ such
that for every continuous semi-norm $p$ on $E$ there is a constant
$C_{p}$ with
$$p\left(\left(\widetilde{X_{i_1}} \cdots \widetilde{X_{i_k}} \func\right)\left(\1\right)\right) \leq C_p\ k!\ R^k$$
for all $(i_1, \ldots, i_k)\in \N^k$, $k\in \N$.
\end{proof}
As a consequence we obtain:

\begin{lem}\label{DeltaAnalytic} Let $(\pi, E)$ be a representation of $G$ on some sequentially complete
locally convex Hausdorff space $E$. Then analytic vectors are $\D$--analytic.
\end{lem}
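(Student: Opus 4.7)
The plan is to reduce the claim to Lemma~\ref{CauchyInEq} applied to the orbit map. Let $v \in \EW$. By definition $\orb_v$ extends holomorphically to some $GU$ with $U \in \U_\C$; restricting to $U$ one obtains an element of $\mathcal{O}(U, E)$. Unwinding the definition of $\widetilde{X}$ one gets
\[
\bigl(\widetilde{X_{i_1}} \cdots \widetilde{X_{i_k}} \orb_v\bigr)(\1) \;=\; d\pi(X_{i_1}) \cdots d\pi(X_{i_k})\, v,
\]
so Lemma~\ref{CauchyInEq} supplies an $R>0$, depending only on $U$, such that for every continuous seminorm $p$ on $E$ there is $C_p$ with
\[
p\bigl(d\pi(X_{i_1})\cdots d\pi(X_{i_k})\,v\bigr) \;\leq\; C_p\, k!\, R^k
\]
for all $(i_1,\dots,i_k) \in \{1,\dots,n\}^k$ and all $k \in \N$. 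After enlarging $R$ if necessary I may assume $R\ge 1$, so that $k!\,R^k \le (2j)!\,R^{2j}$ whenever $k \le 2j$.

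Next I would expand $\D^j$ inside the universal enveloping algebra. Writing $\D = -\sum_{i=1}^n X_i^2 - \sum_{i=1}^n \tr(\ad X_i)\,X_i$ exhibits $\D$ as a linear combination of at most $2n$ monomials in the basis, each of degree $\le 2$ with coefficient bounded by $M := \max_i\bigl(1 + |\tr(\ad X_i)|\bigr)$. Consequently $\D^j$ expands into a sum of at most $(2n)^j$ monomials, each of degree $\le 2j$ and with coefficient bounded by $M^j$.

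Combining the two estimates yields, for every continuous seminorm $p$,
\[
p(\D^j v) \;\leq\; (2n)^j M^j \cdot C_p\,(2j)!\,R^{2j} \;=\; C_p\,(2j)!\,\bigl(2nMR^2\bigr)^j.
\]
Choosing any $0 < \eps < (2nMR^2)^{-1}$ then gives
\[
\sum_{j=0}^\infty \frac{\eps^j}{(2j)!}\, p(\D^j v) \;\leq\; C_p \sum_{j=0}^\infty (2nMR^2 \eps)^j \;<\; \infty,
\]
which is exactly the $\D$--analyticity condition.

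There is no serious analytic obstacle here: the whole argument is a combinatorial bookkeeping on top of Lemma~\ref{CauchyInEq}. The one point I would be careful about is that $\eps$ must be chosen uniformly over seminorms $p$; this is automatic because $R$ in Lemma~\ref{CauchyInEq}, and hence $2nMR^2$, depends only on the neighborhood $U$ of holomorphy of $\orb_v$, while $p$ is absorbed entirely into the constant $C_p$.
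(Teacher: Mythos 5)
Your argument is correct and is exactly the route the paper intends: the paper states this lemma with no written proof beyond the phrase ``as a consequence'' of Lemma~\ref{CauchyInEq}, and your expansion of $\D^j$ into at most $(2n)^j$ monomials of degree $\le 2j$ with coefficients bounded by $M^j$, combined with the uniform $k!\,R^k$ bound from that lemma, is the standard bookkeeping that fills in that consequence. Your closing remark that $\eps$ can be chosen independently of the seminorm $p$ because $R$ depends only on $U$ is precisely the point that makes the $\D$--analyticity definition (which quantifies $\eps$ before $p$) come out right.
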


In Corollary \ref{Danalytic} we will see that the converse holds for $F$-representations.

Let $(\pi, E)$ be an $F$-representation of $G$. Then for each $n\in \N$ there exists
$c_n, C_n>0$ such that

$$ \|\pi(g)\|_n \leq C_n \cdot e^{c_n d(g)} \qquad (g\in G),$$
where

$$\|\pi(g)\|_n:=\sup_{p_n(v)\leq 1\atop v\in E} p_n(\pi(g)v)\, .$$
For $U\in \U_\C$ and $n\in \N$ we set

$$\FUn  = \left\{\func \in \mathcal{O}(GU, E) \mid \forall Q \Subset U\ \forall \eps>0: \
\sup_{g \in G} \sup_{q \in Q} p_n(\func(g q))\ e^{-(c_n+\eps)  d(g)}
< \infty\right\}.$$

We are also going to need the subspace of superexponentially
decaying functions in $\bigcap_n \FUn$:
$$\mathcal{R}(GU,E) = \left\{\func \in \mathcal{O}(GU, E) \mid  \forall Q \Subset U \ \forall n,N \in \N: \
\sup_{g \in G} \sup_{q \in Q} p_n(\func(g q))\ e^{N  d(g)} <
\infty\right\}.$$

We record:
\begin{lem}\label{conv} If $\kappa \in \AG_V$, then right convolution with $\kappa$ is a bounded operator
from $\FUn$ to $\FVn$ for all $n\in \N$.
\end{lem}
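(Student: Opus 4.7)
The convolution $(\varphi * \kappa)(g) = \int_G \varphi(x)\,\kappa(x^{-1}g)\,dx$ for $\varphi\in\FUn$ and $\kappa\in\AG_V$ will be extended to $g\in GV$ by writing $g = g_0 v$ with $g_0\in G$ and $v$ in a compact set $Q'\Subset V$, and setting $(\varphi*\kappa)(g_0 v) := \int_G \varphi(x)\,\kappa(x^{-1}g_0 v)\,dx$. Because $\kappa$ is holomorphic on $VGV$, the integrand is holomorphic in $v$, and the decay of $\kappa$ ($|\kappa(y q_2)|\leq B_N e^{-N d(y)}$ for every $N\in\N$, uniformly in $q_2\in Q'$) combined with the at most exponential growth of $\varphi$ from the $\FUn$ condition yields absolute convergence locally uniform in $v$. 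Standard differentiation under the integral sign then gives holomorphy of $\varphi*\kappa$ on $GV$.

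To verify the defining growth bound of $\FVn$, fix $\eps'>0$ and $Q'\Subset V$. From $\varphi\in\FUn$ (using the singleton compact set $\{\1\}\subset U$) we get $p_n(\varphi(x))\leq A\,e^{(c_n+\eps'/2)d(x)}$. Inserting this and the $\kappa$-bound, and applying the triangle inequality $d(x)\leq d(g_0)+d(x^{-1}g_0)$ to pull $e^{(c_n+\eps'/2)d(g_0)}$ out of the integral, we obtain
$$
p_n\bigl((\varphi*\kappa)(g_0 v)\bigr)\leq A\,B_N\,e^{(c_n+\eps'/2)d(g_0)}\int_G e^{-(N-c_n-\eps'/2)d(x^{-1}g_0)}\,dx.
$$
The substitution $y = x^{-1}g_0$ introduces the modular-function Jacobian, which by Lemma \ref{lem=sm} is bounded by $e^{c'' d(y)}$; hence, for $N$ large enough the remaining integral is finite independently of $g_0$ by the integrability lemma for $e^{-C d(\cdot)}$. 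The required estimate $\sup_{g_0,v} p_n((\varphi*\kappa)(g_0 v))e^{-(c_n+\eps')d(g_0)}<\infty$ follows, so $\varphi*\kappa\in\FVn$.

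Continuity of the operator $\FUn\to\FVn$ is then automatic: the estimate bounds each defining seminorm of $\FVn$ by a constant (depending only on $n$, $\eps'$, $Q'$, and one seminorm of $\kappa$ in $\AG_V$) times a single defining seminorm of $\FUn$. The only technical nuisance is the change of variables together with the modular function, but Lemma \ref{lem=sm} absorbs it into the constants at the price of needing $N$ slightly larger, which is harmless since $\kappa\in\AG_V$ provides $e^{-N d}$-decay for every $N\in\N$.
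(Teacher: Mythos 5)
Your argument is correct: the pointwise decay of $\kappa$ from the definition of $\AG_V$ (with $q_1=\1$, $q_2=v\in Q'\Subset V$), the triangle inequality $d(x)\leq d(g_0)+d(x^{-1}g_0)$, and the absorption of the unimodularity defect via Lemma \ref{lem=sm} are exactly the ingredients needed, and the resulting bound depends on only one defining seminorm of $\FUn$, so boundedness follows. The paper merely records this lemma without proof, and your write-up supplies the routine verification in the expected way; the only point worth making explicit is that the Jacobian of $y=x^{-1}g_0$ is the modular function of $y$ alone (left translation preserves left Haar measure), so the substituted integral is genuinely independent of $g_0$, which is what you implicitly use.
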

We denote by $\Ceps$ the power series expansion $\sum_{j=0}^\infty
\frac{\eps^{2j}}{(2j)!} \D^j$ of $\cosh(\eps \SD)$. Note the
following consequence of Lemma \ref{CauchyInEq}:

\begin{lem}\label{ceps} Let $U, V\in \U_\C$ such that $V\Subset U$. Then there exists $\eps>0$ such that
$\Ceps$ is a bounded operator from
$\FUn$ to $\FVn$ for all $n\in\N$.
\end{lem}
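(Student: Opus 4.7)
The plan is to apply Lemma~\ref{CauchyInEq} to translates of $\varphi$ along the group so as to control $\Delta^j\varphi$ pointwise on $G$, and then to let the factorials $(2j)!$ produced by the Cauchy bound cancel against the denominators of $\Ceps$. Since $V\Subset U$, first fix a neighborhood $U_0\in\U_\C$ of $\1$ small enough that $\overline{V}\cdot\overline{U_0}\subset U$. For $\varphi\in\FUn$, $g\in G$ and $q$ in a compact set $Q\Subset V$, the translated map $\Phi_{g,q}(u):=\varphi(gqu)$ is holomorphic on $U_0$, and left invariance of the $\widetilde{X_i}$ gives $(\widetilde{X_{i_1}}\cdots\widetilde{X_{i_k}}\varphi)(gq)=(\widetilde{X_{i_1}}\cdots\widetilde{X_{i_k}}\Phi_{g,q})(\1)$. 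Applying Lemma~\ref{CauchyInEq} to $\Phi_{g,q}$ yields a radius $R=R(U_0)>0$, independent of $g$, $q$ and $\varphi$, such that for every multi-index of length $k$
\[
p_n\bigl((\widetilde{X_{i_1}}\cdots\widetilde{X_{i_k}}\varphi)(gq)\bigr)\ \leq\ \tilde C\,k!\,R^k\,\sup_{u\in U_0}p_n(\varphi(gqu)).
\]
As $Q\cdot\overline{U_0}\Subset U$ and $\varphi\in\FUn$, the supremum on the right is dominated, for any prescribed $\eps'>0$, by $A_{n,\eps'}\,e^{(c_n+\eps') d(g)}$.

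Next, I expand $\Delta^j$ in the universal enveloping algebra. Writing $\Delta=-\sum_{i=1}^{\dim G}\widetilde{X_i}^2-\sum_{i=1}^{\dim G}\tr(\ad X_i)\,\widetilde{X_i}$ and multiplying out, $\Delta^j$ is a sum of at most $(2\dim G)^j$ ordered monomials in the $\widetilde{X_i}$, each of degree at most $2j$ and with scalar coefficient bounded in modulus by $T^j$ for some $T=T(\g)$. Applying the previous bound term by term (and, without loss of generality, enlarging $R$ so that $R\geq 1$) produces a constant $M>0$ depending only on $\g$ and $U_0$ with
\[
p_n\bigl((\Delta^j\varphi)(gq)\bigr)\ \leq\ \tilde C\,A_{n,\eps'}\,(2j)!\,(MR^2)^j\,e^{(c_n+\eps') d(g)}.
\]

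Substituting this into $\Ceps\varphi=\sum_{j}\frac{\eps^{2j}}{(2j)!}\Delta^j\varphi$, the factorials cancel and only the geometric series $\sum_j(\eps^2 MR^2)^j$ remains. Choosing $\eps>0$ with $\eps^2 MR^2<1$, a condition that depends only on the pair $(U,V)$ and not on the semi-norm index $n$, a single $\eps$ serves all $n$ simultaneously. The resulting estimate $p_n((\Ceps\varphi)(gq))\leq C\,A_{n,\eps'}\,e^{(c_n+\eps') d(g)}$ exhibits $\Ceps\varphi$ as an element of $\FVn$ with continuous dependence on $\varphi$, while uniform convergence of the partial sums on compacta of $GV$ identifies the limit as a holomorphic function. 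The principal obstacle I anticipate is clarifying that the constant $C_p$ in Lemma~\ref{CauchyInEq} depends linearly on a local supremum of $p\circ\varphi$: this is not spelled out in the statement, but is apparent from its proof, in which the coefficients of the Taylor expansion of $\Phi$ about $0$ are controlled by sup-norms on a small polydisc. Once this quantitative refinement is extracted, the geometric-series argument above is routine bookkeeping.
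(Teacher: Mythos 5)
Your proof is correct and is essentially the argument the paper intends: the paper offers no written proof beyond the remark that the lemma is a ``consequence of Lemma \ref{CauchyInEq}'', and your elaboration --- uniform Cauchy estimates for the translates $u\mapsto\func(gqu)$, expansion of $\D^j$ into at most $(2\dim G)^j$ monomials of degree $\leq 2j$, and cancellation of the $(2j)!$ against the coefficients of $\Ceps$ to leave a geometric series with $\eps$ depending only on $(U,V)$ --- is exactly the intended route. Your closing caveat about extracting the linear dependence of $C_p$ on $\sup_{u\in U_0}p(\func(gqu))$ is the right point to flag, and it is indeed supplied by the vector-valued Cauchy integral formula underlying Lemma \ref{CauchyInEq}.
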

As in the Appendix, consider the functions $\alpha_\eps(z) =2 e^{-\eps z \erf(z)}$ and $\beta_\eps(z) =
1-\alpha_\eps(z) \cosh(\eps z)$, which belong to the space $\mathcal{F}_{2\eps,\vartheta}$.
We would like to substitute $\SD$ into our key identity
(\ref{KeyId})
$$\alpha_\eps(z) \cosh(\eps z) + \beta_\eps(z) = 1$$
and replace the hyperbolic cosine by its Taylor expansion.

\begin{lem} \label{FunctionalCalculus} Let $U\in \U_\C$. Then there exist
$\eps>0$ and $V\subset U$ such that for any $\func \in \FUn$, $n\in
\N$,
$$\Ceps(\func) \ast \ka + \func \ast \kb = \func$$
holds as functions on $GV$.
\end{lem}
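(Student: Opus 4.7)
The strategy is to lift the entire-function identity $\alpha_\eps(z)\cosh(\eps z)+\beta_\eps(z)=1$ via the $L^2$-spectral calculus of $\SD$ to the operator identity
$$\cosh(\eps\SD)\,\alpha_\eps(\SD)+\beta_\eps(\SD)=\mathrm{Id}$$
on $L^2(G)$, and then transfer it to $\FUn$ by a reduction to scalars followed by an approximation. Since $\D$ is left-invariant, each $\D^j$ commutes with right convolution by $\ka$; combined with the absolute convergence of $\Ceps$ on $\FUn$ guaranteed by Lemma \ref{ceps}, this gives $\Ceps(\func)\ast\ka=\Ceps(\func\ast\ka)$, so the asserted identity is equivalent to
$$\Ceps(\func\ast\ka)+\func\ast\kb=\func\quad\text{on }GV.\qquad(\star)$$
The parameters $\eps>0$ and $V\Subset U$ are chosen by Lemmas \ref{ceps} and \ref{conv} so that $\Ceps$ and right convolution by $\ka$, $\kb$ all map $\FUn$ continuously into $\FVn$.

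Both sides of $(\star)$ are continuous linear maps $\FUn\to\mathcal{O}(GV,E)$. Fixing $g\in GV$ and $\lambda\in E'$, evaluation followed by pairing with $\lambda$ turns $(\star)$ into the same identity for the scalar function $\lambda\circ\func$; by Hahn--Banach it suffices to prove the scalar version, so I may assume $\func$ is scalar-valued.

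The operator $\cosh(\eps\SD)\alpha_\eps(\SD)=(1-\beta_\eps)(\SD)$ is bounded on $L^2(G)$ because $\cosh(\eps\lambda)\alpha_\eps(\lambda)=1-\beta_\eps(\lambda)$ is, so no domain issue arises. For scalar $\func\in L^2(G)\cap\FUn$, we have $\alpha_\eps(\SD)\func=\func\ast\ka$ and $\beta_\eps(\SD)\func=\func\ast\kb$; using Cauchy-type estimates $\|\D^j(\func\ast\ka)\|_{L^2}\lesssim (2j)!\,R^{2j}$ with $R<\eps^{-1}$ — coming from the analyticity of $\ka$ (Theorem \ref{AnalyticKernelDecay}) propagated through the convolution — one identifies $\cosh(\eps\SD)(\func\ast\ka)$ with the power-series sum $\Ceps(\func\ast\ka)$ via Goodman's criterion. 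This yields $(\star)$ in $L^2$, hence pointwise on $GV$ by holomorphy of both sides.

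To extend from $L^2\cap\FUn$ to general scalar $\func\in\FUn$, I would approximate $\func$ by $\func_{R,t}:=(\chi_R\func)\ast\varrho_t$, where $\chi_R$ is a smooth cutoff on the ball of radius $R$ and $\varrho_t$ is the short-time heat kernel (needed to restore analyticity after the cutoff). As $R\to\infty$ and $t\to 0^+$, $\func_{R,t}\to\func$ in a manner compatible with the $\FUn$-estimates, and the joint continuity of the three terms in $(\star)$ permits passage to the limit. The main obstacle will be controlling these three competing asymptotics uniformly throughout the approximation, together with justifying the Cauchy-type bound on $\|\D^j(\func\ast\ka)\|_{L^2}$ that underlies the application of Goodman's criterion in the scalar $L^2$ step.
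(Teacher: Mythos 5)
Your overall architecture matches the paper's: prove the identity first on $L^2(G)$ via the spectral calculus, reduce the vector-valued case to scalars through $E'$, and then reach general $\func\in\FUn$ by an approximation argument. But there is a genuine gap in the approximation step, and it is exactly the point where the paper has to work. An element of $\FUn$ is holomorphic on the complex neighborhood $GU$ and may grow like $e^{(c_n+\eps)d(g)}$, so it must be truncated; your truncation $(\chi_R\func)\ast\varrho_t$ with a smooth compactly supported cutoff destroys holomorphy on $GU$, and convolving with the time-$t$ heat kernel only restores analyticity on a neighborhood that is not uniform as $t\to0^+$. Moreover the convergence $\func_{R,t}\to\func$ has to take place in the topology of $\FUn$, i.e.\ with uniform control of $\sup_{q\in Q}p_n(\,\cdot\,(gq))\,e^{-(c_n+\eps)d(g)}$ for complex $Q\Subset U$ --- you flag this as the main obstacle but do not resolve it, and I do not see how this mollification scheme can be made to work. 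The paper avoids the problem entirely by using an \emph{entire} cutoff: it sets $\chi_\delta(g)=e^{-\delta\tilde d(g)^2}$, where $\tilde d=e^{-\D}d$ is the holomorphically regularized distance function of Lemma \ref{dReg} (this is the sole purpose of that lemma). Then $\chi_\delta\func\in\mathcal{R}(GU,E)$ for every $\delta>0$, the identity holds for $\chi_\delta\func$ by the $L^2$/scalar step, and $\chi_\delta\func\to\func$ in $\FUn$ as $\delta\to0$, so Lemmas \ref{conv} and \ref{ceps} allow passage to the limit in all three terms.

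Two further points in your $L^2$ step need repair. First, the commutation $\Ceps(\func)\ast\ka=\Ceps(\func\ast\ka)$ is not immediate from left-invariance: a left-invariant operator applied to $\func\ast\ka$ lands on the \emph{second} factor, $\D(\func\ast\ka)=\func\ast\D\ka$, and moving it back onto $\func$ requires an integration by parts (or the spectral theorem on $L^2$); the paper keeps the order $(\cosh(\eps\SD)\func)\ast\ka$ precisely so that the unbounded operator is applied first and the bounded convolution last. Second, your identification of $\cosh(\eps\SD)(\func\ast\ka)$ with its power series rests on a Cauchy-type bound $\|\D^j(\func\ast\ka)\|_{L^2}\lesssim(2j)!\,R^{2j}$ with $R<\eps^{-1}$; this forces you to track quantitatively how the radius of analyticity of $\ka$ depends on $\eps$, which Theorem \ref{AnalyticKernelDecay} does not provide. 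The paper sidesteps both issues with a direct spectral-measure computation: for $\func\in D(\cosh(\eps\SD))$ the partial sums $\sum_{j\le N}\frac{\eps^{2j}}{(2j)!}\D^j\func$ converge to $\cosh(\eps\SD)\func$ in $L^2(G)$ because the tail $\sum_{j,k>N}\int\frac{(\eps\lambda)^{2j}}{(2j)!}\frac{(\eps\lambda)^{2k}}{(2k)!}\langle dP(\lambda)\func,\func\rangle$ is dominated by $\int\cosh(\eps\lambda)^2\,\langle dP(\lambda)\func,\func\rangle<\infty$. I would adopt that computation together with the analytic cutoff $\chi_\delta$.
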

\begin{proof}
Note that $\ka, \kb \in \AG$ according to Theorem
\ref{AnalyticKernelDecay}. We first consider the case $E=\C$ and
 $\func \in L^2(G)$. With
$|\alpha_\eps(z) \cosh(\eps z)|$ being bounded, $\cosh(\eps \SD)$
maps its domain into the domain of $\alpha_\eps(\SD)$, and the rules
of the functional calculus ensure
$$\func - \beta_\eps(\SD) \func = (\alpha_\eps(\cdot) \cosh(\eps \cdot))(\SD) \func= (\cosh(\eps \SD)\func) \ast \ka$$
in $L^2(G)$ for all $\func \in D(\cosh(\eps \SD))$. For such
$\func$, the partial sums of $\Ceps \func$ converge to $\cosh(\eps
\SD)\func$ in $L^2(G)$, and hence almost everywhere. Indeed,
\begin{eqnarray*}
&& \hspace{-0.9cm}\left\|\cosh(\eps \SD)\func
-\sum_{j=0}^N\frac{\eps^{2j}}{(2j)!} \D^j\func
\right\|^2_{L^2(G)}\\&=& \int \left\langle dP(\lambda)\left(
\cosh(\eps \SD)\func -\sum_{j=0}^N\frac{\eps^{2j}}{(2j)!}\ \D^j
\func\right),\cosh(\eps \SD)\func
-\sum_{k=0}^N\frac{\eps^{2k}}{(2k)!}\
\D^k \func \right\rangle\\
&=& \int \left(\cosh(\eps \lambda)
-\sum_{k=0}^N\frac{(\eps\lambda)^{2k}}{(2k)!}\right)^2\ \langle
dP(\lambda) \func, \func \rangle\\
&=& \sum_{j,k=N+1}^\infty \int \frac{(\eps\lambda)^{2j}}{(2j)!}\
\frac{(\eps\lambda)^{2k}}{(2k)!}\ \langle dP(\lambda) \func, \func
\rangle\ ,
\end{eqnarray*}
and the right hand side tends to $0$ for $N \to \infty$, because
$$\sum_{j,k=0}^\infty \int \frac{(\eps\lambda)^{2j}}{(2j)!}\
\frac{(\eps\lambda)^{2k}}{(2k)!}\ \langle dP(\lambda) \func,
\func\rangle = \int \cosh(\eps \lambda)^2 \langle dP(\lambda)
\func,\func\rangle < \infty\ .$$ In particular, given $\func \in
\mathcal{R}(GU,E)$ and $\lambda \in E'$, we obtain $\Ceps
\lambda(\func) = \cosh(\eps \SD) \lambda(\func)$ almost everywhere
and
$$\Ceps(\lambda(\func)) \ast \ka + \lambda(\func) \ast \kb =
\lambda(\func)$$ as analytic functions on $G$ for sufficiently small
$\eps>0$.

Since the above identity holds for all $\lambda \in E'$, we obtain
$$\Ceps(\func) \ast \ka + \func \ast \kb = \func$$
on any connected domain $GV$, $\1\in V \subset U$, on which the left hand
side is holomorphic.

Recall the regularized distance function
$\tilde{d}(g) = e^{-\D} d(g)$ from Lemma \ref{dReg}, and set
$\chi_\delta(g) := e^{-\delta \tilde{d}(g)^2}$ ($\delta>0$). Given
$\func \in \FUn$, $\chi_\delta \func \in \mathcal{R}(GU,E)$ and
$$\Ceps(\chi_\delta \func) \ast \ka + (\chi_\delta\func) \ast \kb =
\chi_\delta\func\, .$$ The limit $\chi_\delta\func \to \func$ in $\FUn$ as $\delta\to 0$
is easily verified. From Lemma \ref{conv} we also get
$(\chi_\delta\func) \ast \kb \to \func \ast \kb$ as $\delta \to 0$.
Finally Lemma \ref{conv} and Lemma \ref{ceps} imply
$$\Ceps(\chi_\delta \func)\ast \ka \to \ \Ceps(\func)\ast \ka
\quad (\delta \to 0).$$ The assertion follows.
\end{proof}

\begin{proof}[Proof of Theorem \ref{MainTheorem}]
Given $v\in E^{\omega}$, the orbit map $\orb_{v}$
belongs to $\bigcap_{n} \FUn$ for some $U\in \U_{\C}$. Applying
Lemma \ref{FunctionalCalculus} to the orbit map and evaluating at $\1$
we obtain the desired factorization
\[v=\gamma_{v}(\1)=\Pi(\ka)\left(\Ceps(\gamma_{v})(\1)\right)+\Pi(\kb)\left(\gamma_{v}(\1)\right).
\]
\end{proof}

Note the following generalization of a theorem by Goodman for
unitary representations \cite{goodman2, warner}.
\begin{cor}\label{Danalytic}
Let $(\pi, E)$ be an F-representation. Then every $\D$--analytic
vector is analytic.
\end{cor}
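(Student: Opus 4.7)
The plan is to establish a factorization $v = \Pi(\ka) w + \Pi(\kb) v$ for some $w \in E$ and sufficiently small $\eps > 0$. Since $\ka, \kb \in \AG$, the right-hand side then lies in $\Pi(\AG) E \subset \EW$, so $v$ is analytic.

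Let $\eps_0 > 0$ be the $\D$-analyticity parameter of $v$ and fix $\eps > 0$ with $\eps^2 < \eps_0$. The series
\[w := \sum_{j=0}^\infty \frac{\eps^{2j}}{(2j)!}\,\D^j v\]
converges absolutely in every defining seminorm of $E$. A standard elliptic-regularity argument (Nelson, Goodman) shows that the existence of $\D^j v \in E$ for all $j$, together with the convergence hypothesis, forces $v \in E^\infty$; hence the orbit map $\gamma_v$ is smooth, and left-invariance of $\D$ yields $\D^j \gamma_v(g) = \pi(g)\,\D^j v$. Consequently $\Ceps\gamma_v$ converges locally uniformly on $G$ to the continuous function $\gamma_w(g) = \pi(g)w$.

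The heart of the argument is the identity $\Pi(\ka)w + \Pi(\kb)v = v$ in $E$. Since $E'$ separates points of $E$, it suffices to verify the equality after pairing with an arbitrary $\lambda \in E'$. Setting $\phi(g) := \lambda(\pi(g)v) \in C^\infty(G)$, the claim reduces to the scalar identity
\[(\Ceps\phi \ast \ka)(\1) + (\phi \ast \kb)(\1) = \phi(\1),\]
where $\Ceps\phi = \lambda\circ\gamma_w$ and both convolutions converge absolutely because $\phi$ has at most exponential growth while $\ka, \kb$ decay superexponentially. I would prove this identity by mimicking the cutoff strategy from the proof of Lemma \ref{FunctionalCalculus}: multiply $\phi$ by the regularized cutoff $\chi_\delta := e^{-\delta \tilde d^2}$ from Lemma \ref{dReg}, apply the $L^2$ functional-calculus identity to the now superexponentially decaying function $\chi_\delta\phi$, and pass to the limit $\delta \to 0^+$.

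The principal obstacle lies in this last step. Because $\D$ does not commute with multiplication by $\chi_\delta$, the Leibniz expansion of $\D^j(\chi_\delta\phi)$ produces a host of cross-terms involving derivatives of $\chi_\delta$ that have no direct counterpart in $\Ceps\phi$. Controlling them requires combining the quantitative $\D$-analyticity estimate $p(\D^j v) \leq C_p (2j)!\,\eps_0^{-j}$, the superexponential decay of $\ka$ and $\kb$, and the observation (from Lemma \ref{dReg}) that the derivatives $\partial^\alpha \chi_\delta$ are concentrated where $\tilde d$ is large and vanish as $\delta \to 0^+$ in every weighted norm that controls convolution against elements of $\AG$.
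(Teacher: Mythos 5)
Your overall strategy---set $w=\sum_j\frac{\eps^{2j}}{(2j)!}\D^jv$, establish $v=\Pi(\ka)w+\Pi(\kb)v$, and conclude $v\in\Pi(\AG)E\subset\EW$---is exactly the intended derivation; the paper states the corollary without proof as a byproduct of the machinery of Lemma \ref{FunctionalCalculus} and Theorem \ref{MainTheorem}. Your preparatory steps are fine: $\D$-analyticity gives $v\in E^\infty$, left invariance gives $\D^j\gamma_v(g)=\pi(g)\,d\pi(\D)^jv$, and hence $\Ceps\gamma_v=\gamma_w$ for $\eps^2<\eps_0$.

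However, the step you yourself flag as the ``principal obstacle'' is a genuine gap, and the remedy you sketch does not close it. In the paper's proof of Lemma \ref{FunctionalCalculus} the limit $\delta\to 0$ is \emph{not} handled by a Leibniz expansion of $\D^j(\chi_\delta\func)$: it is handled by Lemma \ref{ceps}, the boundedness of $\Ceps:\FUn\to\FVn$, which rests on the Cauchy estimates of Lemma \ref{CauchyInEq} and therefore on the holomorphy of $\func$ on $GU$. For your $\phi=\lambda\circ\gamma_v$ this tool is unavailable: $\D$-analyticity controls only the powers $p(\D^jv)$, not the mixed derivatives $\widetilde{X_{i_1}}\cdots\widetilde{X_{i_k}}\phi$ that the Leibniz expansion of $\D^j(\chi_\delta\phi)$ produces. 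The observation that $\partial^\alpha\chi_\delta$ is small where $\tilde d$ is moderate addresses the wrong factor; the uncontrolled quantity is the derivative of $\phi$, and converting bounds on powers of the elliptic operator $\D$ into bounds on all derivatives with the correct factorial growth is precisely the Kotake--Narasimhan/Goodman analytic-domination statement, i.e., essentially the corollary you are proving---so the argument as sketched risks circularity. To close the gap you would need either (i) a direct verification that $\chi_\delta\phi$ lies in $D(\cosh(\eps'\SD))$ for some $\eps'>0$ together with convergence of $\bigl(\cosh(\eps'\SD)(\chi_\delta\phi)\bigr)\ast\ka$ to $(\lambda\circ\gamma_w)\ast\ka$, for instance via the finite-propagation-speed representation $\alpha_\eps(\SD)=\int_\R\hat\alpha_\eps(t)\cos(t\SD)\,dt$, which extends the calculus beyond $L^2$ without requiring derivative bounds on $\phi$; or (ii) an independent analytic-domination input. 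As written, the proof is incomplete at its decisive analytic step.
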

\begin{rem}
a) A further consequence of our Theorem \ref{MainTheorem} is a
simple proof of the fact that the space of analytic vectors for a
Banach representation is complete.\\
b) We can also substitute $\SD$ into Dixmier's and Malliavin's
presentation of the constant function $1$ on the real line
\cite{dixmal}. This invariant refinement of their argument shows
that the smooth vectors for a Fr\'echet representation are precisely
the vectors in the domain of $\Delta^k$ for all $k \in \N$.
\end{rem}

\section{Related Problems}\label{related}

We conclude this article with a discussion of how our techniques can
be modified to deal with a number of similar questions.\\
In the context of the introduction, given a nonunital algebra $\A$,
a category $\CM$ of $\A$--modules is said to have the \emph{strong
factorization property} if for all $\M \in \CM$,
$$\M = \{a m\ \mid\ a \in \A,\ m \in \M\}.$$

\subsection{A Strong Factorization of Test Functions}

Our methods may be applied to solve a related strong factorization
problem for test functions. On $\R^n$ the Fourier transform allows
to write a test function $\func \in C_c^\infty(\Rn)$ as the
convolution $\psi \ast \Psi$ of two Schwartz functions, and
\cite{rst} posed the natural problem whether one could demand $\psi,
\Psi \in \mathcal{R}(\Rn)$. We are going to prove this in a more
general setting.

\begin{thm}
For every real Lie group $G$ $$C_c^\infty(G) \subset \left\{\psi\ast\Psi \ \mid \ \psi,\Psi\in\RG \right\}.$$
\end{thm}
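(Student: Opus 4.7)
The plan is to adapt the strategy of Theorem~\ref{MainTheorem}, replacing the analyticity hypothesis by compact support. Given $\func\in C_c^\infty(G)$, observe that $\func\in\mathcal{R}(G)$ and $\Delta^k\func\in C_c^\infty(G)\subset\mathcal{R}(G)$ for every $k\in\N$, so $\func$ lies in the domain of every polynomial of $\Delta$. Finite propagation speed of the wave kernel gives $\operatorname{supp}(\cos(t\sqrt{\Delta})\func)\subset B_{R+|t|}$ whenever $\operatorname{supp}\func\subset B_R$; this is the substitute for the analyticity step in Lemma~\ref{FunctionalCalculus}, since it allows spectral multipliers of $\sqrt{\Delta}$ to act on $\func$ in a distributional sense via the wave representation.

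Following Lemma~\ref{FunctionalCalculus}, I apply the key identity $\alpha_\eps(z)\cosh(\eps z)+\beta_\eps(z)=1$ at $z=\sqrt{\Delta}$ and to $\func$ to obtain
\[
\func \;=\; \Ceps(\func)\ast\ka \;+\; \func\ast\kb,
\]
where $\Ceps(\func)=\cosh(\eps\sqrt{\Delta})\func$ is interpreted as a compactly supported distribution on $G$ via
\[
\cosh(\eps\sqrt{\Delta})\func \;=\; \int_{\R}\widehat{\cosh(\eps\cdot)}(t)\cos(t\sqrt{\Delta})\func\,dt .
\]
Convolution with the analytic kernel $\ka\in\mathcal{A}(G)$ of Theorem~\ref{AnalyticKernelDecay} smooths $\Ceps(\func)$ to a genuine element of $\mathcal{R}(G)$, and similarly $\func\ast\kb\in\mathcal{R}(G)$. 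Thus $\func$ is expressed as a sum of two elements of $\mathcal{R}(G)\ast\mathcal{R}(G)$.

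The remaining task is to collapse the two-term sum into a single convolution. My plan is to rewrite the identity as $\func\ast(\delta_{\1}-\kb)=\Ceps(\func)\ast\ka$ in the convolution algebra of compactly supported distributions on $G$, then invert $\delta_{\1}-\kb$ by a Neumann-type series. Substituting the original identity iteratively into its own second summand produces, after $N$ steps,
\[
\func \;=\; \Ceps(\func)\ast\ka\ast\sum_{n=0}^{N}\kb^{\ast n} \;+\; \func\ast\kb^{\ast(N+1)},
\]
and passing to $N\to\infty$, if the series converges and the remainder vanishes in $\mathcal{R}(G)$, one obtains the single convolution $\func=\psi\ast\Psi$ with $\psi:=\Ceps(\func)\ast\ka\in\mathcal{R}(G)$ and $\Psi\in\mathcal{R}(G)$ built from iterated convolutions of $\ka,\kb$.

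The main obstacle is the convergence of this Neumann-type iteration in $\mathcal{R}(G)$. The $L^{2}$-operator norm of $\beta_\eps(\sqrt{\Delta})$ equals $1$, attained at the spectral origin $\lambda=0$, so the series cannot converge in the $L^{2}$ operator norm. Completion of the argument requires either a spectral-cutoff decomposition $\func=\func_{\mathrm{low}}+\func_{\mathrm{high}}$, treating $\func_{\mathrm{low}}$ (where $\sqrt{\Delta}$ is effectively bounded and $\func_{\mathrm{low}}$ is analytic) directly via Theorem~\ref{MainTheorem}, and applying the Neumann argument to $\func_{\mathrm{high}}$ on which the effective norm of $\beta_\eps(\sqrt{\Delta})$ is strictly less than $1$; or a refinement of the key identity such that the associated $\beta_\eps$ vanishes at $\lambda=0$. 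Once convergence is secured, the superexponential decay of $\ka$ and $\kb$ from Theorem~\ref{AnalyticKernelDecay} together with finite propagation speed ensure that $\psi,\Psi\in\mathcal{R}(G)$, completing the factorization.
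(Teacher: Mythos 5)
There is a genuine gap, and the chosen route cannot deliver the stated conclusion. Two problems. First, for a general $\func\in C_c^\infty(G)$ the expression $\Ceps(\func)=\cosh(\eps\SD)\func$ is not defined: $\func$ lies in the domain of every power of $\Delta$, but membership in the domain of $\cosh(\eps\SD)$ requires Gevrey-type bounds $p(\Delta^j\func)\lesssim C\,S^{2j}(2j)!$, which fail for non-analytic $\func$; and the proposed wave representation $\int_\R\widehat{\cosh(\eps\cdot)}(t)\cos(t\SD)\func\,dt$ is circular, since $\cosh(\eps\cdot)$ grows exponentially and has no Fourier transform as a measure or integrable function (formally it is a pair of point masses at the imaginary points $\pm i\eps$, which merely reproduces $\cosh(\eps\SD)$). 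The finite-propagation-speed argument behind Theorem \ref{AnalyticKernelDecay} works precisely because the symbols there decay exponentially on a complex neighborhood of $\R$. Second, even granting the two-term identity, the additive decomposition $\alpha_\eps(z)\cosh(\eps z)+\beta_\eps(z)=1$ only produces $\func$ as a \emph{sum} of two convolutions, i.e.\ the span statement of Theorem \ref{MainTheorem}, not the single product claimed here; your Neumann-series repair fails for the reason you yourself identify ($\beta_\eps(0)=1-\alpha_\eps(0)=-1$, so $\sum_n\kb^{\ast n}$ diverges on the spectral subspace near $\lambda=0$), and the suggested fixes are left as open sub-problems --- in particular a low-frequency truncation of a compactly supported $\func$ is neither compactly supported nor handled by Theorem \ref{MainTheorem} as a single product.

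The paper avoids all of this by replacing the additive identity with a \emph{multiplicative} splitting of the constant $1$. It regularizes $\log(1+|z|)$ by convolution with the heat kernel to obtain an entire function, and writes $1=\widehat\psi_m(z)\,\widehat\Psi_m(z)$ with $\widehat\psi_m=e^{-m(\cdot)}$ and $\widehat\Psi_m=e^{+m(\cdot)}$ of that regularization, evaluated at $\SD$. The symbol $\widehat\Psi_m\sim(1+|z|)^m$ yields a kernel which is a compactly supported distribution of order $m$ plus an element of $\RG$, hence $\widehat\Psi_m(\SD)$ maps $C_c^\infty(G)$ into $\RG$; the symbol $\widehat\psi_m\sim(1+|z|)^{-m}$ yields $\psi_m\in\RG\cap C^k(G)$ for $m$ large. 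This gives $\func=(\widehat\Psi_m(\SD)\func)\ast\psi_m$ directly as a single convolution of two elements of $\RG$. The lesson, if you want to salvage your route, is that strong (single-product) factorization needs a symbol identity of the form $1=a(z)b(z)$ with each factor acceptable as a multiplier on the relevant spaces, not $1=a(z)c(z)+b(z)$.
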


As above, we first regularize an appropriate distance function and
set
$$\l(z) =  \frac{1}{\sqrt{\pi}} e^{-z^2} \ast \log(1+|z|).$$
\begin{lem}
The function $\l(z)$ is entire and approximates $\log(1+|z|)$ in the
sense that for all $N>0$, $\vartheta \in (0,1)$ there exists a
constant $C_{N, \vartheta}$ such that
$$|\l(z) - \log(1+|z|)| \leq C_{N, \vartheta} \quad(z \in \WNT).$$
\end{lem}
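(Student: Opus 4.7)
The plan is to work directly with the integral representation $\l(z) = \frac{1}{\sqrt{\pi}} \int_\R e^{-(z-t)^2} \log(1+|t|)\, dt$. Entireness follows from the standard holomorphic-integral theorem: on any compact $K \subset \C$ with $|z| \leq R$, the pointwise bound $\bigl|e^{-(z-t)^2}\log(1+|t|)\bigr| \leq e^{R^2}\, e^{-(|t|-R)^2}\, \log(1+|t|)$ for $|t| \geq R$ provides an $L^1$-majorant in $t$, so $\l \in \OC$.

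For the uniform estimate, the symmetries $\l(-z) = \l(z)$ (via $t \mapsto -t$) and $\l(\bar z) = \overline{\l(z)}$ let us restrict to $x := \Re z \geq 0$ and $y := \Im z \geq 0$; by continuity we may also assume $x$ large. Using $\int_\R e^{-(z-t)^2}\, dt = \sqrt{\pi}$ (valid for complex $z$ by a Cauchy contour shift), the starting identity is
$\l(z) - \log(1+|z|) = \frac{1}{\sqrt{\pi}} \int_\R e^{-(z-t)^2} [\log(1+|t|) - \log(1+|z|)]\, dt$.
In the strip regime $y \leq N$, the factor $\bigl|e^{-(z-t)^2}\bigr| = e^{y^2-(x-t)^2} \leq e^{N^2} e^{-(x-t)^2}$ is well-controlled; combined with the elementary inequality $|\log(1+a)-\log(1+b)| \leq \log(1+|a-b|)$ for $a, b \geq 0$ and $\bigl||t|-|z|\bigr| \leq |t-z|$, this yields a bound depending only on $N$.

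The cone regime $y < \vartheta x$ with $\vartheta < 1$ is the main obstacle, since $e^{y^2}$ is unbounded there. The remedy is to deform contours, exploiting the holomorphy of $\log(1+t)$ off the cut $(-\infty, -1]$. Split $\sqrt{\pi}\, \l(z) = I_+ + I_-$ with $I_\pm := \int_0^\infty e^{-(z \mp t)^2} \log(1+t)\, dt$ and deform the $I_+$ contour through the rectangle with vertices $0, R, R+iy, iy$ as $R \to \infty$. On the upper side $t = s+iy$ one has $z - t = x - s \in \R$, so $\bigl|e^{-(z-t)^2}\bigr|$ reduces to the ordinary Gaussian $e^{-(x-s)^2}$; the vertical correction on $[0, iy]$ is bounded by $|y| e^{-x^2 + y^2}(\log(1+|y|)+\pi/2) \leq C x e^{-(1-\vartheta^2)x^2}\log(1+x)$, which vanishes as $x \to \infty$ precisely because $\vartheta < 1$, and the far vertical side at $\Re t = R$ vanishes by Gaussian decay. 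An analogous downward shift of $I_-$ to $\Im t = -y$ yields $\bigl|e^{-(z+t)^2}\bigr| \leq e^{y^2-x^2}$ on the shifted contour, making the entire $I_-$ contribution exponentially small. Combined with the main identity, what remains to estimate is $\frac{1}{\sqrt{\pi}} \int_0^\infty e^{-(x-s)^2} [\log(1+s+iy) - \log(1+|z|)]\, ds$: writing $\log(1+s+iy) = \log|1+s+iy| + i\arg(1+s+iy)$, the imaginary part is bounded by $\pi^{3/2}/2$ since $|\arg| \leq \pi/2$, while for the real part the Lipschitz bound $\bigl|\sqrt{(1+s)^2+y^2} - \sqrt{x^2+y^2}\bigr| \leq |1+s-x|$ combined with the log inequality yields $\bigl|\log|1+s+iy| - \log(1+|z|)\bigr| \leq \log(3+|s-x|)$, whose Gaussian integral is a finite constant independent of $x$ and $y$.
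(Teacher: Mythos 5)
The paper states this lemma without proof, so there is no argument of the authors to compare against; your proposal is correct and supplies the missing details. The splitting into the two regimes is the right one: on the strip $|\Im z|<N$ the crude bound $|e^{-(z-t)^2}|\le e^{N^2}e^{-(\Re z-t)^2}$ together with $|\log(1+a)-\log(1+b)|\le\log(1+|a-b|)$ for $a,b\ge 0$ already suffices (this is the analogue of the paper's proof of Lemma \ref{dReg}, where the shift stays in a fixed compact neighborhood and no further idea is needed). You correctly identify that the genuine difficulty is the cone, where $e^{(\Im z)^2}$ is unbounded, and the contour shift of your $I_+$ to $\Im t=\Im z$ --- which turns the Gaussian factor into the real Gaussian $e^{-(\Re z-s)^2}$ at the cost of a vertical correction of size $O\bigl(|z|\,e^{-(1-\vartheta^2)(\Re z)^2}\log(1+|z|)\bigr)$ --- is exactly the device that makes the estimate close; the subsequent bounds (the $\pi/2$ control of the argument since $\Re(1+s+iy)>0$, the Lipschitz bound $\bigl|\sqrt{(1+s)^2+y^2}-\sqrt{x^2+y^2}\bigr|\le|1+s-x|$, and the resulting integrable majorant $\log(3+|s-x|)$) are all valid. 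Two harmless loose ends you should tidy up in a written version: the term $I_-$ needs no contour shift at all, since $\Re((z+t)^2)=(x+t)^2-y^2\ge(1-\vartheta^2)x^2$ already holds for $t\ge 0$ in the cone; and when you replace $\int_0^\infty e^{-(x-s)^2}\,ds$ by $\sqrt{\pi}$ in the ``main identity'' step you discard a term of size $\log(1+|z|)\int_x^\infty e^{-u^2}\,du$, which should be observed to be uniformly bounded. Neither affects the correctness of the argument.
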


Let $m \in \N$. We would like to substitute the square root of the
Laplacian associated to a left invariant metric $G$ into a
decomposition
$$1=\widehat \psi_m(z)\ \widehat \Psi_m(z)$$
of the identity. In the current situation we use $\widehat\psi_m(z)
= e^{-m \l(z)}$ and $\widehat \Psi_m(z) = e^{m\l(z)}$. Denote the
convolution kernels of $\widehat \psi_m(\SD)$ and $\widehat
\Psi_m(\SD)$ by $\psi_m$ resp.~$\Psi_m$. The ideas from the proof of
Theorem \ref{AnalyticKernelDecay} may be combined with the results
of \cite{cgt} to obtain:

\begin{lem}
Let $\chi \in C_c^\infty(G)$ with $\chi = 1$ in a neighborhood of
$\1$. Then $\chi \Psi_m$ is a compactly supported distribution of
order $m$ and $(1-\chi)\Psi_m \in \RG \cap C^\infty(G)$. Given $k
\in \N$, $\psi_m \in \RG \cap C^k(G)$ for sufficiently large $m$.
\end{lem}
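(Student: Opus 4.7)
The plan is to follow the strategy of Theorem \ref{AnalyticKernelDecay}, representing
\begin{equation*}
\kappa_f(x) = \int_\R \mathcal{F}[f](\lambda)\cos(\lambda\SD)\delta_\1(x)\,d\lambda
\end{equation*}
and exploiting finite propagation speed of the wave operator ($\operatorname{supp}\cos(\lambda\SD)\delta_\1 \subset \overline{B}(\1,|\lambda|)$, \cite{cgt}). The previous lemma yields $|\widehat{\psi}_m(z)| \leq C_N (1+|z|)^{-m}$ and $|\widehat{\Psi}_m(z)| \leq C_N (1+|z|)^m$ on each $\WNT$, so the analysis of $\psi_m$ parallels the proof of Theorem \ref{AnalyticKernelDecay}, while the polynomial growth of $\widehat{\Psi}_m$ forces a distributional reading of the same formula near $\1$.

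First I would handle the off-diagonal region. For $\psi_m$ with $m>1$, $\widehat{\psi}_m$ is integrable on each horizontal strip $|\Im z| \leq N$; shifting the contour produces $|\mathcal{F}[\widehat{\psi}_m]^{(l)}(\lambda)| \leq C_N\, l!\, R^l e^{-N|\lambda|}$ for every $N,l$, exactly as in the appendix. Inserting this into the wave-kernel formula and using finite propagation speed yields the super-exponential decay $\psi_m \in \RG$, and the $L^2$--Sobolev argument in the proof of Theorem \ref{AnalyticKernelDecay} (cutting off a fundamental solution of $\D^k$, iterating $\D^l$, and using Sobolev embedding) then gives $\psi_m \in C^k$ once $m$ is large enough relative to $k$. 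For $\Psi_m$ one first strips the polynomial growth by writing $\widehat{\Psi}_m(z) = (1+z^2)^k h(z)$ with $k$ so large that $h$ is integrable on every strip; then $\Psi_m = (1+\D)^k \kappa_h$ with $\kappa_h \in \RG \cap C^\infty(G)$, and finite propagation speed on the region $\{d(\cdot,\1)\geq\eps\}$ delivers $(1-\chi)\Psi_m \in \RG \cap C^\infty(G)$.

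For the distributional order of $\chi \Psi_m$ I would argue through the symbol. For $\phi \in C_c^\infty(G)$ supported where $\chi = 1$,
\begin{equation*}
\langle \chi\Psi_m,\phi\rangle = (e^{m\l(\SD)}\phi)(\1).
\end{equation*}
Since $|e^{m\l(z)}| \leq C_N (1+|z|)^m$ on each $\WNT$, the operator $e^{m\l(\SD)}$ behaves like an order-$m$ pseudo-differential operator: decomposing $e^{m\l(z)} = (1+z^2)^{m/2}h_m(z)$ with $h_m$ bounded on strips factors it as $(1+\D)^{m/2}$ composed with a Sobolev-bounded operator, and Sobolev embedding $H^s \hookrightarrow C^0$ for $s>\dim G/2$ produces an estimate $|(e^{m\l(\SD)}\phi)(\1)| \leq C\|\phi\|_{C^m(\operatorname{supp}\chi)}$, possibly after enlarging $m$ by a fixed dimension-dependent amount.

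The hard part is this last step: the polynomial growth of $\widehat{\Psi}_m$ removes the $L^2$ footing used for $\psi_m$, so matching the growth order of the symbol to the distributional order of the kernel at $\1$ requires either the pseudo-differential bookkeeping above or an explicit Taylor split $e^{m\l(z)} = P_m(z) + r_m(z)$ in which the polynomial part $P_m$ of degree $\leq m$ contributes an order-$m$ distribution at $\1$ (via $P_m(\D)\delta_\1$) and the remainder $r_m$ falls into the scope of Theorem \ref{AnalyticKernelDecay}.
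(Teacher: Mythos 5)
The paper gives no actual proof of this lemma---it is stated with the one-line remark that the ideas of Theorem \ref{AnalyticKernelDecay} combine with the results of \cite{cgt}---and your proposal fleshes out precisely that strategy: the wave-kernel representation, finite propagation speed, contour shifting for the Fourier transform of the symbol, and the parametrix for $\D^k$ plus Sobolev embedding for local regularity. The architecture and the endpoints are the intended ones, but two intermediate steps are stated in a form that does not hold. First, the bound $|\mathcal{F}[\widehat{\psi}_m]^{(l)}(\lambda)| \leq C_N\, l!\, R^l e^{-N|\lambda|}$ cannot hold ``for every $N,l$, exactly as in the appendix'': the appendix lemma requires the symbol to lie in $\FCT$, i.e.\ to decay exponentially on $\WNT$, whereas $\widehat{\psi}_m$ decays only like $(1+|z|)^{-m}$. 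Its Fourier transform is therefore not holomorphic in any strip, the Cauchy-formula route to the factorial constants is unavailable, and differentiating under the integral costs a factor $|z|^l$, so only derivatives of order $l \leq m-2$ are controlled. Had the estimate held for all $l$ with those constants, $\psi_m$ would be analytic, which is not what the lemma asserts; your final conclusion ($\psi_m \in C^k$ for $m$ large relative to $k$) matches the corrected finite-order estimate, so the quantifier, not the conclusion, is what must be repaired.

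Second, the smoothness and decay of $(1-\chi)\Psi_m$ do not follow from the decomposition $\widehat{\Psi}_m(z) = (1+z^2)^k h(z)$ as written: $(1+\D)^k \kappa_h$ with $\kappa_h$ of only finite regularity is a priori a distribution of positive order \emph{everywhere}, and nothing in that factorization localizes the singularity at $\1$. What does the work is a genuine symbol estimate: the regularized logarithm in the exponent has $j$-th derivative $O\bigl((1+|z|)^{-j}\bigr)$ on $\WNT$, hence $|\partial_z^{j}\widehat{\Psi}_m(z)| \leq C_{j,N}(1+|z|)^{m-j}$, so that for $2k>m+1$ one may write $\mathcal{F}[\widehat{\Psi}_m](\lambda) = (-1)^k\lambda^{-2k}\,\mathcal{F}[\partial_z^{2k}\widehat{\Psi}_m](\lambda)$ on $\{|\lambda|\geq \eps\}$, a smooth superexponentially decaying function there. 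Only after this (the actual mechanism of \cite{cgt}) does the wave-kernel formula restricted to $|\lambda|\geq d(x)$ yield $(1-\chi)\Psi_m \in \RG\cap C^\infty(G)$; the growth bound $|\widehat{\Psi}_m|\leq C_N(1+|z|)^m$ alone is not enough. Finally, your pseudodifferential bookkeeping for $\chi\Psi_m$ yields order $m+s$ with $s>\tfrac12\dim G$ rather than order $m$; since the application only needs the order to be finite so that $\widehat{\Psi}_m(\SD)$ maps $C_c^\infty(G)$ into $\RG$, this is harmless, but you are right to flag it rather than claim the sharp value.
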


Therefore $\widehat \Psi_m(\SD)$ maps $C_c^\infty(G)$ to $\RG$. The
functional calculus leads to a factorization
$$\Id_{C_c^\infty(G)} = \widehat \psi_m(\SD)\ \widehat \Psi_m(\SD)$$
of the identity, and in particular for any $\func \in
C_c^\infty(G)$,
$$\func = (\widehat \Psi_m(\SD)\ \func) \ast \psi_m \in \RG \ast \RG.$$

\subsection{Strong Factorization of $\AG$}

It might be possible to strengthen Theorem \ref{MainTheorem} by
showing that the analytic vectors have the strong factorization
property.
\begin{con}
For any F-representation $(\pi, E)$ of a real Lie group $G$,
$$\EW = \{\Pi(\func) v\ \mid \ \func \in \AG, \ v \in \EW\}.$$
\end{con}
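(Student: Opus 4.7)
The plan is to imitate the strategy of Section~\ref{related} for test functions by seeking a \emph{multiplicative} decomposition $1 = \widehat\psi(z)\,\widehat\Psi(z)$ of entire functions in place of the additive key identity used in Theorem~\ref{MainTheorem}. If one can arrange that $\widehat\psi(\SD)$ acts as right-convolution with some $\psi \in \AG$ and that $\widehat\Psi(\SD)$ sends the orbit map $\gamma_v$ of any $v \in \EW$ to the orbit map $\gamma_w$ of some $w \in \EW$, then $\gamma_v = \widehat\psi(\SD)\widehat\Psi(\SD)\gamma_v = \gamma_w \ast \psi$, and evaluation at $\1$ yields the strong factorization $v = \Pi(\psi)\,w$ (using the symmetry $\psi(g) = \psi(g^{-1})$ of kernels arising from the functional calculus of $\SD$).

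The first step is to exhibit an even, entire, zero-free $\widehat\psi$ lying in some $\FCT$, so that by Theorem~\ref{AnalyticKernelDecay} the kernel $\psi := \kappa_{\widehat\psi}$ belongs to $\AG$. The second step is to give meaning to $\widehat\Psi := 1/\widehat\psi$ as an operator on $\FUn$ for sufficiently small neighborhoods $U$. Since $\widehat\Psi$ is entire and even, one would mimic Lemma~\ref{ceps} by using its Taylor expansion at $0$, writing $\widehat\Psi(z) = \sum_j b_j z^{2j}$ and setting $\widehat\Psi(\SD) = \sum_j b_j\,\D^j$. Combining Lemma~\ref{CauchyInEq} with Lemma~\ref{DeltaAnalytic}, the convergence on $\FUn$ reduces to an estimate $\sum_j |b_j|\,(2j)!\,R^{2j} < \infty$ for some $R>0$ depending on $U$.

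The main obstacle is the tension between these two requirements on $\widehat\psi$. For $\widehat\psi$ to place $\psi$ in $\AG$ via Theorem~\ref{AnalyticKernelDecay}, it must decay exponentially on every horizontal strip and on wedges $|\Im z| < \vartheta|\Re z|$; the Phragm\'en--Lindel\"of principle then forces $\widehat\Psi = 1/\widehat\psi$ to grow at least exponentially and typically super-exponentially in complementary directions. Hence $\widehat\Psi$ fails to have exponential type, and the Cauchy bound $|b_j| \leq M(r)/r^{2j}$ uses an $M(r)$ too large to give the required series convergence. The natural candidate $\widehat\psi(z) = \alpha_\eps(z) = 2 e^{-\eps z\erf(z)}$ from Theorem~\ref{MainTheorem} illustrates this: although $\widehat\Psi(z) = \tfrac{1}{2} e^{\eps z\erf(z)}$ actually \emph{decays} super-exponentially on the imaginary axis (since $iy\,\erf(iy) = -\tfrac{2y}{\sqrt{\pi}}\int_0^y e^{s^2}\,ds$) and grows only like $e^{\eps|z|}$ on the real axis, it grows super-exponentially in the wedges $\pi/4 < |\arg z| < 3\pi/4$ where the asymptotic $\erf(z) \sim 1 - e^{-z^2}/(\sqrt{\pi}z)$ picks up an exponentially large error term, defeating the Cauchy estimate.

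If the direct multiplicative approach cannot be pushed through, a fallback is to iterate Theorem~\ref{MainTheorem}, expressing $v = \sum_{n \geq 0}\Pi(\kb^{\ast n} \ast \ka)\,\Ceps(\gamma_v)(\1)$ through repeated substitution, and to attempt to sum $\sum_n \kb^{\ast n} \ast \ka$ into a single element of $\AG$ by a Mittag-Leffler-type resummation. This would require a contraction estimate for convolution by $\kb$ in an appropriate norm on $\AG$, which is however obstructed by $\beta_\eps$ attaining modulus $1$ at the bottom of the spectrum of $\SD$; any such scheme would therefore need to be coupled with an additional regularizing step damping the low-frequency part of each remainder before summation.
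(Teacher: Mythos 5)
This statement is labeled as a \emph{Conjecture} in the paper, and the paper does not prove it: it only offers supporting evidence, namely a proof for Banach representations of $(\R,+)$ (with remarks that the argument extends to $(\R^n,+)$ and, via the Iwasawa decomposition, to $SL_2(\R)$). Your proposal, by its own admission, also does not prove the statement; it is a (largely correct) diagnosis of why two natural attack routes fail. In particular your central obstruction is real and can be made completely precise: if $\widehat\psi$ is entire, even, zero-free, and lies in some $\FCT$, while $\widehat\Psi=1/\widehat\psi$ is required to have exponential type (which is what the summability condition $\sum_j |b_j|(2j)!R^{2j}<\infty$ forces), then $\widehat\Psi=e^{h}$ with $h$ entire, and Hadamard's theorem makes $h$ a polynomial of degree at most one --- incompatible with $\widehat\psi$ decaying in both real directions. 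So no \emph{uniform} multiplicative decomposition can feed into the power-series functional calculus of Lemma \ref{ceps}. Your fallback via iterating Theorem \ref{MainTheorem} indeed stalls at $|\beta_\eps(0)|=1$.

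What you are missing is the paper's way around this obstruction in the special case: make the multiplicative decomposition depend on the vector $v$, and work on the Fourier transform side rather than through a Taylor-expanded operator $\widehat\Psi(\SD)$. For a Banach representation of $(\R,+)$ and $v\in\EW$ whose orbit map extends to the strip $\{|\operatorname{Im}z|<R\}$, the paper defines a hyperfunction-style Fourier transform $\mathcal{F}(\orb_v)$, which decays like $e^{-r|x|}$ for every $r<R$, and writes $\mathcal{F}(\orb_v)=e^{-g}\,e^{g}\mathcal{F}(\orb_v)$ with $g(z)=\tfrac{Rz}{2}\erf(z)$. The factor $e^{g}$ grows only like $e^{R|x|/2}$ on $\R$, so $e^{g}\mathcal{F}(\orb_v)$ is still exponentially decaying and its inverse Fourier transform evaluated at $0$ is a vector $w\in\EW$; meanwhile $e^{-g}$ lies in a space of type $\FCT$, so $\mathcal{F}^{-1}(e^{-g})\in\AR$, giving $v=(2\pi)^{-1}\Pi(\mathcal{F}^{-1}(e^{-g}))\,w$. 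The growth of $e^{g}$ in the oblique wedges, which defeats your Cauchy estimates, simply never enters because the argument uses contour integrals over horizontal lines rather than a power series in $\D$. This per-vector tuning of the exponential type to the width $R$ of the strip of analyticity is exactly the idea your proposal lacks; the open problem is precisely how to carry it out for a general Lie group, where there is no Fourier transform to decompose the orbit map along the spectrum.
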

We provide some evidence in support of this conjecture and verify it
for Banach representations of $(\R, +)$ using hyperfunction
techniques.
\begin{lem}
The conjecture holds for every Banach representation of $(\R, +)$.
\end{lem}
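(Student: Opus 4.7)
The plan is to reduce the strong factorization for Banach representations of $(\R,+)$ to a division problem for entire functions of Paley--Wiener type, resolved via a hyperfunction inverse. Let $A$ denote the infinitesimal generator of $\pi$, so that $\pi(t)=e^{tA}$, $\|\pi(t)\|\le M e^{c|t|}$, and $\sigma(A)\subset\Sigma_c:=\{\lambda\in\C:|\operatorname{Re}\lambda|\le c\}$. For $\varphi\in\AR$ the convolution action becomes $\Pi(\varphi)=\tilde{\varphi}(A)$, where $\tilde{\varphi}(\lambda)=\int_\R \varphi(t)e^{t\lambda}\, dt$ is the bilateral Laplace transform: since $\varphi$ decays superexponentially and extends holomorphically to a strip of width $r_\varphi$, $\tilde\varphi$ is entire, bounded on every vertical strip, and decays faster than $e^{-r|\operatorname{Im}\lambda|}$ on the imaginary axis for every $r<r_\varphi$.

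Given $v\in\EW$, I would represent $v$ as a vector-valued hyperfunction through the boundary values of the resolvent $\lambda\mapsto R(\lambda,A)v$, which is holomorphic on $\C\setminus\sigma(A)$; the analyticity of $v$ (i.e.\ holomorphic extension of $\gamma_v$ to a strip) corresponds to additional regularity of these boundary values along $\sigma(A)$, so that $v$ is carried by an $E$-valued analytic functional with a well-defined analytic support inside $\Sigma_c$. The next step is to pick $\varphi\in\AR$, possibly depending on $v$, such that $\tilde{\varphi}$ has no zeros on a complex neighborhood of that support. One then solves $\tilde{\varphi}(A)w=v$ via the contour integral
\[w\ =\ \frac{1}{2\pi i}\int_\Gamma \tilde{\varphi}(\lambda)^{-1}\, R(\lambda,A)\, v\, d\lambda,\]
where $\Gamma$ is chosen to separate the zero set of $\tilde{\varphi}$ from the analytic support of the hyperfunction. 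Matching the growth of $\tilde{\varphi}^{-1}$ along $\Gamma$ against the decay of $R(\cdot,A)v$ coming from the analyticity of $v$ should yield convergence of the integral and produce an analytic vector $w$; the functional calculus then gives $v=\tilde{\varphi}(A)w=\Pi(\varphi)w$.

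The main obstacle is the choice of $\varphi$: entire functions in the Fourier--Laplace image of $\AR$ decay exponentially on the imaginary axis and must therefore, by Hadamard factorization, have infinitely many zeros, so a universal $\varphi$ cannot exist. The hyperfunction framework is essential here because it provides a sharp description of the analytic support of $v$ inside $\Sigma_c$, against which the zero set of $\tilde{\varphi}$ can be arranged. Making this matching precise --- constructing, for each $v\in\EW$, an explicit $\varphi\in\AR$ whose zero set avoids the support of $v$ and for which the contour integral above yields a vector whose orbit map extends holomorphically to a strip --- is where the real work lies.
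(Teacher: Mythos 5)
Your general framework --- boundary values of one--sided Fourier--Laplace transforms of the orbit and division by the symbol of the convolution operator --- is essentially the one the paper uses, but your argument stops exactly where the proof actually takes place, namely at the construction of $\varphi$, and the ``main obstacle'' you cite to justify stopping there is not a real obstacle. The Fourier image of $\AR$ does contain zero--free elements: for $g(z)=\frac{Rz}{2}\erf(z)$ the function $e^{-g}$ is entire, nowhere vanishing, decays exponentially on the real axis, and lies in a class $\mathcal{F}_{c,\vartheta}$ of the appendix, so its inverse Fourier transform belongs to $\AR$ --- the same mechanism that produces $\alpha_\eps=2e^{-\eps z\erf(z)}$ in the weak factorization. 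Your appeal to Hadamard factorization tacitly assumes finite order or exponential type; $e^{-g}$ is of infinite order (it grows doubly exponentially along the imaginary axis), and for such functions exponential decay on a line is perfectly compatible with the absence of zeros. Consequently no zero--avoidance relative to an ``analytic support,'' and no $v$--dependent contour separation, is needed: the divisor depends on $v$ only through the width $R$ of the strip to which $\gamma_v$ extends holomorphically.

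Here is how the paper completes the step you leave open. With $\|\pi(t)\|\le Ce^{c|t|}$, define $\mathcal{F}_{\pm}(\gamma_v)$ on the half--planes $\pm\operatorname{Im}z>c$ and set $\mathcal{F}(\gamma_v)(x)=\mathcal{F}_{+}(\gamma_v)(x+2ic)-\mathcal{F}_{-}(\gamma_v)(x-2ic)$; holomorphy of $\gamma_v$ on $|\operatorname{Im}z|<R$ gives $\|\mathcal{F}(\gamma_v)(x)\|\le C_r e^{-r|x|}$ for every $r<R$. Inserting $1=e^{-g}\cdot e^{g}$, the product $e^{g}\mathcal{F}(\gamma_v)$ still decays exponentially because $e^{g}$ grows on $\R$ only at rate $R/2<r$; applying the inverse transform (a pair of contour integrals over $\operatorname{Im}t=\pm 2c$) and evaluating at $0$ yields
$$v=(2\pi)^{-1}\,\Pi\left(\mathcal{F}^{-1}(e^{-g})\right)\left(\mathcal{F}^{-1}\left(e^{g}\mathcal{F}(\gamma_v)\right)(0)\right),$$
with the second factor an analytic vector. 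Your resolvent integral $\frac{1}{2\pi i}\int_\Gamma\tilde\varphi(\lambda)^{-1}R(\lambda,A)v\,d\lambda$ is morally the same object, but it drags in the unbounded generator, the justification of the functional calculus $\tilde\varphi(A)$, and an unspecified notion of analytic support, none of which is needed when one works with the orbit map directly. As it stands your text is a program rather than a proof, and the program is organized around an obstruction that the correct choice of $g$ simply dissolves.
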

\begin{proof}Let $(\pi,E)$ be a representation of $\mathbb{R}$ on a Banach
space $(E,\left\|\cdot\right\|)$. Then there exist constants $c,C>0$
such that $\|\pi(x)\|\leq C e^{c|x|}$ for all $x\in \mathbb{R}$. If
$v\in E^{\omega}$, there exists $R>0$ such that the orbit map
$\gamma_{v}$ extends holomorphically to the strip
$S_{R}=\{z\in\mathbb{C} \ | \ \operatorname{Im}z\in (-R,R) \}$. Let
\begin{align*}
\mathcal{F}_{+}\left(\gamma_{v}\right)(z) &=\int_{-\infty}^{0}\gamma_{v}(t)e^{-itz}\ dt, \ \ \ \operatorname{Im}z>c, \\
-\mathcal{F}_{-}\left(\gamma_{v}\right)(z)
&=\int_{0}^{\infty}\gamma_{v}(t)e^{-itz}\ dt, \ \ \
\operatorname{Im}z<-c.
\end{align*}
Define the Fourier transform $\mathcal{F}(\gamma_{v})$ of $\gamma_{v}$ by
\[\mathcal{F}\left(\gamma_{v}\right)(x)=\mathcal{F}_{+}\left(\gamma_{v}\right)(x+2ic)-
\mathcal{F}_{-}\left(\gamma_{v}\right)(x-2ic).\] Note that
$\left\|\mathcal{F}\left(\orb_v\right)(x)\right\|e^{r\left|x\right|}$
is bounded for every $r<R$. Let $g(z) := \frac{Rz}{2} \erf(z)$ and
write $\mathcal{F}\left(\gamma_{v}\right)$ as
\begin{align}\label{fo}\mathcal{F}\left(\gamma_{v}\right)=e^{-g}e^{g}\mathcal{F}\left(\gamma_{v}\right)
\end{align}
Define the inverse Fourier transform $\mathcal{F}^{-1}(\mathcal{F}(\gamma_{v}))$
for $x\in \mathbb{R}$ by
\[\mathcal{F}^{-1}(\mathcal{F}(\gamma_{v}))(x)=\int_{\operatorname{Im}t=2c}\mathcal{F}_{+}\left(\gamma_{v}\right)(t)e^{itx} \ dt
-\int_{\operatorname{Im}t=-2c}\mathcal{F}_{-}\left(\gamma_{v}\right)(t)e^{itx}
\ dt.
\]
Applying the inverse Fourier transform to both sides of $(\ref{fo})$ and evaluating at $0$ yields
\begin{align*}
v=(2\pi)^{-1}
\Pi\left(\mathcal{F}^{-1}\left(e^{-g}\right)\right)\left(\mathcal{F}^{-1}\left(e^{g}\mathcal{F}(\gamma_{v})\right)(0)\right).
\end{align*}
The assertion follows because
$\mathcal{F}^{-1}\left(e^{-g}\right)\in \mathcal{A}(\mathbb{R})$.
\end{proof}

Strong factorization likewise holds for Banach representations of
$(\mathbb{R}^{n},+)$. Using the Iwasawa decomposition we are able to deduce from this
the conjecture for $SL_{2}(\mathbb{R})$. 

\begin{appendix}
\section{An Identity of Entire Functions}

Consider the following space of exponentially decaying holomorphic
functions
\begin{eqnarray*} \FCT =  \left\{\func \in \OC \mid
\forall N\in \N: \sup_{z \in \WNT}
|\func(z)|\ e^{c|z|} < \infty\right\},\\
\WNT = \left\{z \in \C \mid |\Im z| < N \right\} \cup \left\{z \in \C \mid |\Im z| <
\vartheta |\Re z| \right\}.
\end{eqnarray*}
To understand the convolution kernel of a Fourier multiplication
operator on $L^2(\R)$ with symbol in $\FCT$, or more generally
functions of $\SD$ on a manifold as in Section
\ref{SectionGeometricAnalysis}, we need some properties of the
Fourier transformed functions.

\begin{lem}
Given $f \in \FCT$, there exist $C, R>0$ such that
$$|\hat{f}^{(k)}(z)| \leq C_n\ k!\ R^k e^{-n|z|}$$
for all $k, n \in \N$.
\end{lem}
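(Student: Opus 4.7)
The plan is to prove the estimate by shifting the contour of the Fourier integral and exploiting two features of $f \in \FCT$: its holomorphy on each strip $|\Im w| < N$, and the uniform superexponential decay $|f(w)| \leq C_N e^{-c|w|}$ there. Writing
$$\hat f^{(k)}(z) = \int_\R (-iw)^k f(w) e^{-iwz}\, dw$$
for $z\in\R$, I would replace the contour $\R$ by $\R + i y_n$ with $y_n := -n\operatorname{sgn}(z)$ (the case $z=0$ being trivial). To justify the shift via Cauchy's theorem on a rectangle, the two vertical segments at $\Re w = \pm X$ contribute at most $X^k \cdot C_N e^{-cX} \cdot n$, which vanishes as $X \to \infty$ for any $N > n$; here the decay in $\FCT$ on the strip $|\Im w| < N$ beats the polynomial factor $|w|^k$.

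After the shift, $|e^{-iwz}| = e^{y_n z} = e^{-n|z|}$ pulls out of the integral, leaving
$$|\hat f^{(k)}(z)| \leq e^{-n|z|} C_N \int_\R |x + iy_n|^k e^{-c|x + iy_n|}\, dx.$$
The elementary inequality $r^k e^{-cr/2} \leq (2k/(ce))^k \leq k!\,(2/c)^k$ (via Stirling) lets me split the exponential as $e^{-c|x+iy_n|} = e^{-c|x+iy_n|/2}\cdot e^{-c|x+iy_n|/2}$, absorb the polynomial factor $|x+iy_n|^k$ into a bound of the form $k!\, R^k$ with $R := 2/c$, and handle the remaining factor $\int_\R e^{-c|x+iy_n|/2}\, dx$ by direct integration. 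Combining yields
$$|\hat f^{(k)}(z)| \leq C_n\, k!\, R^k e^{-n|z|},$$
as desired, with $R$ independent of $n$ and $k$.

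The only delicate bookkeeping is the interplay between $n$ (how far to shift the contour) and $N$ (how wide a strip one invokes the $\FCT$ bound on): choosing any $N > n$ works, and the resulting $n$-dependence of the prefactor is absorbed into $C_n$. I do not anticipate a serious obstacle; notably, the sector portion $\{|\Im z| < \vartheta|\Re z|\}$ of $\WNT$ plays no role in this particular estimate, since all shifts lie in the horizontal strip $|\Im w| \leq n$ where the strip bound from $\FCT$ suffices.
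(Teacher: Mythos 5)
Your proof is correct. It is also close in spirit to the paper's, which likewise rests on a Paley--Wiener contour shift, but the two arguments extract the factorial bound by different devices. The paper first records that $\hat f$ extends to a holomorphic, superexponentially decaying function on $\WCT$ (the shift to $\Im w=\pm n$ gives the decay $e^{-n|z|}$; the exponential decay of $f$ at rate $c$ on $\R$ gives holomorphy of $\hat f$ on a strip of width comparable to $c$), and then obtains $|\hat f^{(k)}(z)|\leq k!\,\rho^{-k}\sup_{|w-z|=\rho}|\hat f(w)|$ from the Cauchy integral formula in the $z$-variable, absorbing $e^{n\rho}$ into $C_n$. You instead differentiate under the integral sign and control the resulting moment $\int_\R |w|^k e^{-c|w|}\,dw$ on the shifted contour via $r^k e^{-cr/2}\leq k!\,(2/c)^k$, which yields $R=2/c$ directly without ever invoking the holomorphy of $\hat f$ itself. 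Your route is more self-contained and makes the uniformity of $R$ in $n$ and $k$ completely explicit; the paper's is shorter given that the holomorphic extension of $\hat f$ to $\WCT$ is natural in its framework. Two cosmetic points: on the vertical segments of the rectangle the integrand also carries the factor $|e^{-iwz}|\leq e^{n|z|}$, which you omitted but which is constant in $X$ and so harmless; and your argument treats real $z$ only, which is exactly what the application in Theorem \ref{AnalyticKernelDecay} requires.
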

\begin{proof}
Given $f \in \FCT$, the Fourier transform extends to a
superexponentially decaying holomorphic function on
$\mathcal{W}_{c,\vartheta}$. It follows from Cauchy's integral
formula that
$$|\hat{f}^{(k)}(z)| \leq C_n\ k!\ R^k e^{-n|z|}$$ for all $k, n \in \N$.
\end{proof}

Some important examples of functions in $\FCT$ may be constructed
with the help of the Gaussian error function \cite{wolfram}
$$\erf(x) = \frac{2}{\sqrt{\pi}} \int_{-\infty}^x e^{-t^2}\ dt.$$
The error function extends to an odd entire function, and $\erf(z) - 1 =
O(z^{-1} e^{-z^2})$ as $z \to \infty$ in a sector $\{|\operatorname{Im} z| <
\vartheta \operatorname{Re} z\}$ around $\R_+$.
\begin{rem}
The function
$$z \erf(z) = \frac{1}{\sqrt{\pi}} e^{-z^2} \ast |z| - \frac{1}{\sqrt{\pi}}e^{-z^2}$$
is just one convenient regularization of the absolute value $|z|$,
and the basic properties we need also hold for other similarly
constructed functions. For example replace the heat kernel $ \frac{1}{\sqrt{\pi}}e^{-z^2}$
by a suitable analytic probability density.
\end{rem}

For any $\eps>0$, some algebra shows that the even entire functions
$\alpha_\eps(z) =2 e^{-\eps z \erf(z)}$ and $\beta_\eps(z) =
1-\alpha_\eps(z) \cosh(\eps z)$ decay exponentially as $z \to
\infty$ in $\WNT$ for any $\vartheta<1$. Hence $\alpha_\eps,
\beta_\eps \in \mathcal{F}_{2\eps,\vartheta}$. Our
later factorization hinges on a multiplicative decomposition of the
constant function $1$:
\begin{lem}\label{KeyId}
For all $\eps>0,\vartheta\in\left(0,1\right)$, the functions
$\alpha_\eps, \beta_\eps \in \mathcal{F}_{2\eps,\vartheta}$ satisfy
the identity
$$\alpha_\eps(z) \cosh(\eps z) + \beta_\eps(z) = 1.$$
\end{lem}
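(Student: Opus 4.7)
The identity $\alpha_\eps(z)\cosh(\eps z) + \beta_\eps(z) = 1$ is immediate from the very definition $\beta_\eps(z) = 1 - \alpha_\eps(z)\cosh(\eps z)$, so no proof is needed for that part. The substantive content is the claim $\alpha_\eps, \beta_\eps \in \mathcal{F}_{2\eps,\vartheta}$. Both functions are plainly entire (compositions of $\exp$, $\cosh$, and the entire function $\erf$) and even (since $\erf$ is odd), so I only need to verify the growth bound on $\WNT$ for each $N$.

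My plan is to split $\WNT$, for each fixed $N$, into a bounded piece and a conic piece. Using evenness I may assume $\Re z \geq 0$. Set $B = \WNT \cap \{0 \leq \Re z \leq N/\vartheta\}$ and $A = \WNT \cap \{\Re z > N/\vartheta\}$. Then $B$ sits inside the bounded box $\{0 \leq \Re z \leq N/\vartheta,\, |\Im z| \leq N\}$, on which $\alpha_\eps$ and $\beta_\eps$ are bounded by continuity and $e^{2\eps|z|}$ is likewise bounded, so the required estimate is trivial there. On $A$ the defining inequality $|\Im z| < \vartheta \Re z$ forces $|\arg z| < \arctan\vartheta < \pi/4$, and the condition $\vartheta < 1$ places $A$ inside a sector where the classical asymptotic
$$\erf(z) = 1 - \frac{e^{-z^2}}{z\sqrt{\pi}}\bigl(1 + O(z^{-2})\bigr) \qquad (z \to \infty,\ |\arg z| < \pi/4 - \delta)$$
applies with super-exponentially small error, since $\Re(z^2) \geq (1-\vartheta^2)|\Re z|^2$ on the cone.

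Next I would expand
$$\alpha_\eps(z)\cosh(\eps z) = e^{\eps z(1 - \erf z)} + e^{-\eps z(1 + \erf z)}$$
and feed in the asymptotic. The first summand reduces to $1 + O(e^{-c_\vartheta|z|^2})$ and the second to $e^{-2\eps z}(1 + O(e^{-c_\vartheta|z|^2}))$, giving $\beta_\eps(z) = -e^{-2\eps z} + O(e^{-c_\vartheta|z|^2})$ on $A$ and hence the estimate $|\beta_\eps(z)| \lesssim e^{-2\eps \Re z}$. For $\alpha_\eps$, a parallel direct bound $\alpha_\eps(z) = 2e^{-\eps z}(1 + O(e^{-c_\vartheta|z|^2}))$ combined with the geometric fact $\Re z \geq |z|/\sqrt{1+\vartheta^2}$ on the cone converts the $\Re z$-decay into the $|z|$-decay required by the definition of $\mathcal{F}_{c,\vartheta}$.

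The hard part will be the bookkeeping of the asymptotic error terms from the $\erf$ expansion — verifying that all the $O(\cdot)$ contributions really are super-exponentially small, uniformly on $A$, and are dominated by the leading exponential bounds. One also has to confirm that the rate $2\eps$ appearing in $\mathcal{F}_{2\eps,\vartheta}$ is correctly reflected in the leading exponential terms, rather than a slower rate that would only give membership in some $\mathcal{F}_{c,\vartheta}$ with $c<2\eps$.
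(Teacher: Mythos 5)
Your outline is the natural way of carrying out what the paper itself dismisses in one sentence before the lemma (``some algebra shows'' that $\alpha_\eps,\beta_\eps$ decay exponentially on $\WNT$); there is no further proof in the paper to compare against. The definitional nature of the identity, the reduction to $\Re z\ge 0$ by evenness, the split of $\WNT$ into the bounded box $\{0\le\Re z\le N/\vartheta,\ |\Im z|\le N\}$ and the subcone of $\{|\arg z|<\pi/4\}$, and the computation $\alpha_\eps(z)\cosh(\eps z)=e^{\eps z(1-\erf z)}+e^{-\eps z(1+\erf z)}$ combined with $1-\erf(z)=O(z^{-1}e^{-z^2})$ are all correct, and the error terms are indeed uniformly superexponentially small on the cone since $\Re(z^2)\ge(1-\vartheta^2)(\Re z)^2$ there. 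Up to this point you have the intended argument.

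The step you defer as ``bookkeeping,'' however, is not bookkeeping: the rate $2\eps$ is \emph{not} attained, and no amount of care with the error terms will produce it. Your own asymptotics give $\alpha_\eps(x)=2e^{-\eps x}(1+o(1))$ for real $x\to+\infty$, so $\sup_{z\in\WNT}|\alpha_\eps(z)|\,e^{2\eps|z|}=+\infty$ and $\alpha_\eps\notin\mathcal{F}_{2\eps,\vartheta}$ as literally defined; the bound $|\alpha_\eps(z)|\lesssim e^{-\eps\Re z}$ together with $\Re z\ge|z|/\sqrt{1+\vartheta^2}$ on the cone yields only $\alpha_\eps\in\mathcal{F}_{c,\vartheta}$ for $c\le\eps/\sqrt{1+\vartheta^2}$, and likewise $\beta_\eps\in\mathcal{F}_{c,\vartheta}$ only for $c\le 2\eps/\sqrt{1+\vartheta^2}<2\eps$. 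The constant $2\eps$ in the statement is a (harmless) slip of the paper: everything downstream --- Theorem \ref{KernelDecay}, Theorem \ref{AnalyticKernelDecay} and the Fourier transform lemma in the appendix --- uses only membership in $\FCT$ for \emph{some} $c>0$, and in Lemma \ref{FunctionalCalculus} only boundedness of $\alpha_\eps(z)\cosh(\eps z)$ is invoked. You should therefore finish by proving the corrected claim $\alpha_\eps,\beta_\eps\in\mathcal{F}_{c,\vartheta}$ for, say, $c=\eps/\sqrt{2}$ (using $\vartheta<1$), which your computation already delivers, rather than trying to ``confirm'' an exponent that is false already on the real axis.
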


\end{appendix}

\end{document}